\documentclass[12pt]{amsart}
\addtolength{\oddsidemargin}{-.4in}
\addtolength{\evensidemargin}{-.4in}
\addtolength{\textwidth}{1.0in} \addtolength{\textheight}{1in}
\usepackage{amsmath,amssymb}

\theoremstyle{plain}
\newtheorem{thm}{Theorem}[section]
\newtheorem{theorem}[thm]{Theorem}

\newtheorem{lemma}[thm]{Lemma}
\newtheorem{corollary}[thm]{Corollary}
\newtheorem{proposition}[thm]{Proposition}
\theoremstyle{definition}

\newtheorem{remark}[thm]{Remark}

\numberwithin{equation}{section}

\newcommand{\sC}{{\mathcal C}}

\newcommand{\sK}{{\mathcal K}}
\newcommand{\sL}{\mathcal{L}}

\newcommand{\sN}{{\mathcal N}}
\newcommand{\sO}{{\mathcal O}}

\newcommand{\sU}{{\mathcal U}}


\newcommand{\A}{{\mathbb A}}

\newcommand{\C}{{\mathbb C}}

\newcommand{\bG}{{\mathbb G}}

\newcommand{\bP}{{\mathbb P}}

\newcommand{\id}{{\rm id}}

\newcommand{\diag}{{\rm diag}}
\newcommand{\n}{{\rm n}}

\newcommand{\PSp}{{\rm PSp}}

\newcommand{\SO}{{\rm SO}}

\newcommand{\fsl}{{\mathfrak s}{\mathfrak l}}
\newcommand{\fgl}{{\mathfrak g}{\mathfrak l}}
\newcommand{\fsp}{{\mathfrak s}{\mathfrak p}}
\newcommand{\fcsp}{{\mathfrak c}{\mathfrak s}{\mathfrak p}}

\newcommand{\fg}{{\mathfrak g}}
\newcommand{\fh}{{\mathfrak h}}
\newcommand{\fl}{{\mathfrak l}}

\newcommand{\fp}{{\mathfrak p}}

\newcommand{\aut}{{\mathfrak a}{\mathfrak u}{\mathfrak t}}

\newcommand{\BP}{{\mathbb P}}
\newcommand{\0}{{\mathcal O}}

\def\RatCurves{\mathop{\rm RatCurves}\nolimits}
\def\Aut{\mathop{\rm Aut}\nolimits}

\def\GL{\mathop{\rm GL}\nolimits}

\def\Sym{\mathop{\rm Sym}\nolimits}
\def\Pic{\mathop{\rm Pic}\nolimits}
\def\End{\mathop{\rm End}\nolimits}
\def\Hom{\mathop{\rm Hom}\nolimits}

\def\PGL{\mathop{\rm PGL}\nolimits}
\def\SL{\mathop{\rm SL}\nolimits}

\def\min{\mathop{\rm min}\nolimits}

\author{Michel Brion and Baohua Fu}

\title{Minimal rational curves on wonderful group compactifications}

\begin{document}

\begin{abstract}
Consider a simple algebraic group $G$ of adjoint type, and its wonderful 
compactification $X$. We show that $X$ admits a unique family of minimal 
rational curves, and we explicitly describe the subfamily consisting of 
curves through a general point. As an application, we show that $X$ has
the target rigidity property when $G$ is not of type $A_1$ or $C$. 
\end{abstract}

\maketitle

\section{Introduction}

Throughout this note, we consider algebraic varieties 
over the field of complex numbers. For a uniruled projective 
manifold $X$, let $\RatCurves^\n(X)$ 
denote the normalization of the space of rational curves on $X$ 
(see \cite[II.2.11]{Kollar}). Every irreducible component $\sK$ of 
$\RatCurves^\n(X)$ is a (normal) quasi-projective variety equipped with
a quasi-finite morphism to the Chow variety of $X$; the image consists
of the Chow points of irreducible, generically reduced rational curves.  
Also, there is a universal family $\sU$ with projections 
$\rho : \sU \to \sK$, $\mu : \sU \to X$, and $\rho$ is 
a $\bP^1$-bundle (for these results, see 
\cite[II.2.11, II.2.15]{Kollar}).

For any $x \in X$, let $\sU_x := \mu^{-1}(x)$ and $\sK_x := \rho(\sU_x)$; 
then $\sK$ is called a \emph{family of minimal rational curves} 
if $\sK_x$ is non-empty and projective for a general point $x$. 
There is a rational map $\tau: \sK_x \dasharrow \bP T_x(X)$ 
(the projective space of lines in the tangent space at $x$) 
that sends any curve which is smooth at $x$ to its tangent direction. 
The closure of the image of $\tau$ is denoted by $\sC_x$ 
and called the \emph{variety of minimal rational tangents} 
(VMRT) at the point $x$. 
By \cite[Thm.~1]{HM2} and \cite[Thm.~3.4]{Kebekus}, 
composing $\tau$ with the normalization map $\sK_x^\n \to \sK_x$ 
yields the normalization of $\sC_x$. Also, $\sK_x^\n$ is
a union of components of the variety $\RatCurves^\n(x, X)$ defined
in \cite[II.2.11.2]{Kollar}, and hence is smooth for $x \in X$ 
very general by \cite[II.3.11.5]{Kollar}.

The projective geometry of $\sC_x \subset \bP T_x(X)$ encodes many 
geometric properties of $X$, and may be used effectively for solving
a number of problems on uniruled varieties (see e.g. \cite{Hwang}). 
When $X$ has Picard number one, the VMRT has been determined in many 
cases, including the homogeneous manifolds $G/P$, where $G$ is
a simple linear algebraic group and $P$ a maximal parabolic subgroup
(see \cite[Prop.~1]{HM1} for the case where $P$ corresponds to
a long simple root, and \cite[Prop.~3.5]{HM3}, 
\cite[Prop.~3.2.1, Prop.~8.1.3]{HM4} for the remaining cases;
see also \cite[Thm.~4.8]{LM}).  
But very few examples with large Picard number are known. 
The case of complete toric manifolds is worked out in \cite{CFH}, 
where it is shown that such a manifold may admit several families 
of minimal rational curves.

The aim of this note is to study minimal rational curves on wonderful 
compactifications of semisimple algebraic groups of adjoint type. 
For any such group $G$, De Concini and Procesi introduced in \cite{dCP} 
its wonderful compactification $X$. This is a projective manifold 
equipped with an action of $G \times G$ and containing $G$ as 
an open orbit (where $G \times G$ acts on $G$ by left and right 
multiplication). Moreover, the boundary $X \setminus G$
is a union of $\ell$ irreducible divisors $D_1, \cdots, D_\ell$ 
with smooth normal crossings, where $\ell$ is the rank of $G$, 
and the $G \times G$-orbit closures in $X$ are exactly the partial
intersections $D_{i_1} \cap \cdots \cap D_{i_s}$. 

The manifold $X$ is rational, since so is $G$; moreover, 
$X$ is Fano with Picard number $\ell$. Also, by homogeneity, 
the VMRT at every point $x$ of the open orbit $G$ is well-defined 
and independent of $x$. We may thus choose for $x$ the neutral 
element of $G$, called the base point of $X$; then $T_x(X)$ is 
the Lie algebra $\fg$ of $G$. The isotropy group of $x$ in
$G \times G$ is $G$ embedded diagonally; it acts on $T_x(X)$
via the adjoint representation, and the induced action on
$\bP \fg$ stabilizes the VMRT $\sC_x$.

We may now state the main result of this note:

\begin{theorem}\label{thm:main}
Let $X$ be the wonderful compactification of a simple algebraic 
group $G$ of adjoint type. Then 

\item{\rm (i)} There exists a unique family of minimal rational curves
$\sK$ on $X$. Moreover, $\sK_x$ is smooth and the rational
map $\tau : \sK_x \dasharrow \sC_x$ is an isomorphism.

\item{\rm (ii)} $\sC_x$ is the closed $G$-orbit in $\bP \fg$, 
if $G$ is not of type $A$.

\item{\rm (iii)} When $G$ is of type $A_\ell$ ($\ell \geq 2$), 
so that $G = \PGL(V)$ for a vector space $V$ of dimension $\ell + 1$, 
the VMRT $\sC_x$ is the image of $\bP V \times \bP V^*$ 
under the Segre embedding $\bP V \times \bP V^* \to \bP \End(V)$, 
followed by the projection 
$\bP \End(V) \dasharrow \bP(\End(V)/ \C \id) = \bP \fg$ with center 
$x = [\id]$.
\end{theorem}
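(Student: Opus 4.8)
The plan is to realise the minimal rational curves explicitly, as closures of one-parameter subgroups through the base point, to pin down their numerical class from the divisor theory and the cone of curves of the wonderful compactification, and finally to extract the VMRT from the adjoint action of $G$ on $\bP\fg$.

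For $x = e$ and $v \in \fg$, let $C_v$ denote the closure in $X$ of $\{\exp(tv) : t \in \C\}\cdot x$; it is a rational curve through $x$ with tangent direction $[v]$ at $x$. I would first recall the divisor theory of $X$: the colors $\Delta_1, \dots, \Delta_\ell$ (closures in $X$ of the codimension-one $B^-\times B$-orbits) form a $\Z$-basis of $\mathrm{Pic}(X)$, each $\Delta_i = \sL_{\omega_i}$ being globally generated via the morphism $X \to \bP\End(V_{\omega_i})$; the boundary divisors satisfy $D_i = \sum_j a_{ij}\Delta_j$ with $(a_{ij})$ the Cartan matrix; and $-K_X = \sum_i D_i + 2\sum_i \Delta_i$. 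Intersecting with $C_v$ gives $\Delta_i\cdot C_v = \deg(t \mapsto [\rho_{\omega_i}(\exp tv)])$, an explicit invariant of the operator $\rho_{\omega_i}(v)$ (the number of its distinct eigenvalues and its Jordan type), and likewise a formula for $D_i\cdot C_v$. This isolates the curves of least anticanonical degree: the $C_v$ with $v$ in the minimal nilpotent orbit $\sO_{\min}$ (in particular $v = e_\theta$ for the highest root $\theta$), together with --- only when $G$ is of type $A$, where $\fg \subset \End(V)$ and rank is available --- the $C_v$ for which the coset $v + \C\,\mathrm{id}$ meets the rank-one matrices. Let $\sK$ be the irreducible component of $\RatCurves^\n(X)$ containing $[C_{e_\theta}]$.

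By $G\times G$-homogeneity the translates $(g,g')\cdot C_{e_\theta}$ cover $X$, so a general member of $\sK$ is free, $\sK$ is a minimal rational component, and the curves of $\sK$ through a general point $x$ (which lies in the open orbit) are precisely the $G\times G$-translates of the $C_v$ above; hence $\sK_x$ is, as a set, the closure of a single $\mathrm{Ad}(G)$-orbit, so irreducible. For the uniqueness statement in (i) I would use that the VMRT of \emph{any} minimal rational component $\sK'$ is a non-empty $\mathrm{Ad}(G)$-stable closed subvariety of $\bP\fg$, hence contains the unique closed $G$-orbit $Z = \bP\sO_{\min}$ (the image of $G/P_\theta$ in $\bP\fg$); this forces $-K_X\cdot C' = \dim\sC'_x + 2 \ge \dim Z + 2$. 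The explicit degree computation shows that $C_{e_\theta}$ realises the minimal covering class (which, by the known polyhedral structure of $\overline{NE}(X)$ for the spherical variety $X$, is determined by the extremal rays --- all of which are represented by curves contained in the boundary divisors $D_i$), and that every strictly larger covering class splits into a connected chain containing a $C_v$. I expect this step --- excluding any second minimal rational component, i.e.\ the sharp lower bound together with the splitting of higher covering classes --- to be the main technical obstacle; the bound one gets from nefness of the $\Delta_i$ alone is not sharp, and one needs the finer picture of how rational curves meet the boundary.

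It remains to identify $\sC_x = \tau(\sK_x)$, an irreducible $\mathrm{Ad}(G)$-stable subvariety of $\bP\fg$ containing $Z$, of dimension $\dim\sK_x = -K_X\cdot C_{e_\theta} - 2$. When $G$ is not of type $A$ the computation yields $\dim\sC_x = \dim Z$; since $\sO_{\min}$ is the \emph{unique} nonzero $\mathrm{Ad}(G)$-orbit of minimal dimension, while every other orbit (nilpotent or semisimple) has strictly larger closure in $\bP\fg$, we are forced to $\sC_x = Z$, which is (ii). When $G = \PGL(V)$ the computation yields $\dim\sC_x = \dim Z + 1$, and the extra direction is supplied by the orbit of a rank-one idempotent: its closure in $\bP\fg$ is exactly the image of the Segre embedding $\bP V\times\bP V^* \hookrightarrow \bP\End(V)$ followed by the projection from $[\mathrm{id}]$ (the rank-one idempotents filling the dense orbit and the rank-one nilpotents mapping onto $Z$), an image which is a smooth projective variety since $[\mathrm{id}]$ lies on no secant line of the Segre variety for $\ell \ge 2$; matching dimensions gives $\sC_x$ equal to that image, which is (iii). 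In either case $\sC_x$ is normal, so by \cite[Thm.~3.4]{Kebekus} and \cite[Thm.~1]{HM2} the finite birational morphism $\tau : \sK_x \to \sC_x$ is an isomorphism and $\sK_x$ is smooth; together with the uniqueness this completes (i).
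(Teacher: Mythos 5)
Your outline of parts (ii) and (iii) is essentially the paper's: compute $\dim\sK_x=-K_X\cdot C_\theta-2$, observe that $\sC_x$ is an irreducible closed $\mathrm{Ad}(G)$-stable subvariety of $\bP\fg$ containing the closed orbit $\bP\sO_{\min}$ and compare dimensions, and in type $A$ produce the extra curves from multiplicative one-parameter subgroups, checking that $[\id]$ lies on no secant of the Segre variety. But there is a genuine gap at exactly the step you flag yourself: the uniqueness of the minimal family in (i). Your plan is to bound $-K_X\cdot C'$ from below for an arbitrary minimal component $\sK'$ and then argue that every covering class of larger degree ``splits''. This cannot work as stated: a family of minimal rational curves is defined by projectivity of $\sK'_x$ for general $x$, not by minimality of degree, so a second minimal component of strictly larger degree is not excluded by any splitting of its numerical class (the complete toric manifolds cited in the introduction do carry several minimal families), and even a second component of the \emph{same} degree would remain possible. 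Moreover parts (ii)--(iii) lean on this uniqueness: to place $C_\eta$ (for $\eta=\diag(t,1,\dots,1)$) in the \emph{same} family as $C_{e_\theta}$ you invoke equality of degrees, which only shows that both lie in some minimal family, not the same one.

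The paper closes this gap by a different, fixed-point-theoretic mechanism that your proposal lacks. For any minimal family $\sK'$, the set $\sK'_x$ is a projective variety on which a Borel subgroup $B$ of $\diag(G)$ acts, so by Borel's fixed point theorem it contains an irreducible $B$-stable curve through $x$. Lemma \ref{lem:curves} shows that, because the isotropy representation (here the adjoint representation, which is irreducible) has linearly independent highest weights, such a curve is uniquely determined by its tangent line at $x$, which must be a highest weight line; hence $C_\theta$ is the \emph{only} irreducible $B$-stable curve through $x$ (Lemma \ref{lem:stable}), and every minimal family is the family of deformations of $C_\theta$. This also repairs your unsupported assertion that $\sK_x$ consists, as a set, of translates of one-parameter subgroup closures: a priori a deformation of $C_{e_\theta}$ through $x$ need not be one, and the paper only identifies $\sK_x$ with $G/P_\theta$, resp.\ $\bP^\ell\times(\bP^\ell)^*$, after the closed-orbit and dimension analysis of Proposition \ref{prop:irr}. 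Finally, your last step derives smoothness of $\sK_x$ from normality of $\sC_x$, which is backwards: Kebekus and Hwang--Mok only give that $\sK_x^{\n}\to\sC_x$ is the normalization of $\sC_x$, so you need smoothness (or at least normality) of $\sK_x$ beforehand --- the paper obtains it from freeness of curves meeting the open orbit, Lemma \ref{lem:gen} --- before concluding that $\tau$ itself is an isomorphism.
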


The uniqueness of a family of minimal rational curves on $X$ 
is somewhat surprising, since $X$ can have an arbitrary large Picard 
number. The key point is the uniqueness of a $B$-stable irreducible curve
through $x$, where $B$ is a Borel subgroup of $G$ acting on $X$ by
conjugation (see Proposition \ref{prop:irr} for a more general result). 
When $G$ is not of type $A$, it turns out that the VMRT has the same 
dimension as the closed orbit in $\bP \fg$, which implies claim (ii). 
When $G = \PGL(V)$, we construct additional minimal rational curves 
through $x$ by using multiplicative one-parameter subgroups. 
In contrast with the rational homogeneous manifolds, the wonderful 
compactification $X$ is not covered by lines when $\ell \geq 2$ 
(see Remark \ref{rem:lines}; when $\ell = 1$, we have $G = \PGL_2$ 
and $X \cong \bP^3$, hence $\sK_x \cong \bP^2$ and $\sC_x \cong \bP \fg$). 

In the general case, where $G$ is no longer assumed simple,
we have a unique decomposition into simple factors,
\begin{equation}\label{eqn:G} 
G = G_1 \times \cdots \times G_m. 
\end{equation}
Then accordingly,
\begin{equation}\label{eqn:X} 
X = X_1 \times \cdots \times X_m, 
\end{equation} 
where $X_i$ denotes the wonderful compactification of $G_i$.

\begin{proposition}\label{prop:prod}
With the above notation and assumptions, let $\sK$ be a family 
of minimal rational curves on $X$. Then $\sK$ consists of curves 
on a unique factor $X_i$. In particular, $\sC_x$ 
is the VMRT of $X_i$ at its base point.
\end{proposition}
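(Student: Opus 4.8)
The plan is to exploit the product structure \eqref{eqn:X} together with the fact that a minimal rational curve must have minimal degree in a precise sense. First I would recall that for a product $X = X_1 \times \cdots \times X_m$ of Fano manifolds, the projection $p_i : X \to X_i$ of any rational curve $C \subset X$ to the factor $X_i$ is either a point or a rational curve, and the anticanonical degree splits as $-K_X \cdot C = \sum_i (-K_{X_i} \cdot p_{i*}C)$. Since each $X_i$ is Fano (as noted in the introduction for the simple factors), each summand $-K_{X_i} \cdot p_{i*}C$ is a nonnegative integer, and it is strictly positive whenever $p_i(C)$ is a curve. Hence a rational curve of minimal anticanonical degree among those through a general point must have $p_i(C)$ a point for all but one index $i$; that is, it lies in a fiber $\{x_1\} \times \cdots \times X_i \times \cdots \times \{x_m\} \cong X_i$.

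The key step is to upgrade this pointwise observation to a statement about the whole family $\sK$. Let $\sK$ be a family of minimal rational curves on $X$, with universal family $\rho : \sU \to \sK$, $\mu : \sU \to X$. For a general point $x = (x_1, \ldots, x_m)$ and a general member $[C] \in \sK_x$, the argument above shows $C$ lies in the factor-fiber through $x$ corresponding to some index $i = i(C)$. Because $\sK_x$ is irreducible (it is the normalization datum attached to a single component $\sK$) — or, if one prefers, because the locus of $[C]$ with $i(C) = i$ is closed in $\sK_x$ and these loci cover it — the index $i(C)$ is constant on a general member, say equal to a fixed $i$. I would then argue that this $i$ does not depend on the general point $x$ either: the condition "$\mu(\rho^{-1}(s))$ projects to a point in $X_j$ for all $j \neq i$" is a closed condition on $s \in \sK$, cut out using the projections $p_j \circ \mu$, and it holds on a dense subset, hence on all of $\sK$. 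Therefore every member of $\sK$ lies in a fiber of the projection $X \to \prod_{j \neq i} X_j$, i.e. in a translate of $X_i$.

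Finally, I would identify $\sK$ with a family of minimal rational curves on $X_i$ itself. Fixing the coordinates in the other factors exhibits $\sU \to \sK \times \prod_{j\neq i} X_j$ and, after restricting to a single fiber $X_i = X_i \times \{(x_j)_{j\neq i}\}$, the curves through a general point of $X_i$ form a projective subfamily of $\RatCurves^\n(X_i)$ of minimal degree; by Theorem \ref{thm:main}(i) applied to the simple group $G_i$ this family is unique, so $\sK$ is (the pullback of) this unique family, and in particular $\sC_x \subset \bP T_x(X) = \bP(\fg_1 \oplus \cdots \oplus \fg_m)$ is contained in the summand $\bP \fg_i$ and equals the VMRT of $X_i$ at its base point.

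The main obstacle I anticipate is the passage from "a general member through a general point lies in one factor" to "every member lies in a single, globally fixed factor $X_i$": one must check that the index $i$ cannot jump, neither as the curve varies in $\sK_x$ nor as the base point $x$ varies, and this requires care with the irreducibility of $\sK$ and with which loci are open versus closed. Once the index is pinned down globally, the reduction to Theorem \ref{thm:main}(i) for the simple factor is essentially formal.
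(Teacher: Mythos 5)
Your argument proves the statement only for curves of minimal anticanonical degree, which is not how this paper defines a family of minimal rational curves: here $\sK$ is minimal if $\sK_x$ is non-empty and \emph{projective} for general $x$. Minimality of degree implies this (as the paper notes in the type $A$ part of the proof of Theorem \ref{thm:main}), but not conversely: on a product, the minimal family of a factor $X_j$ whose degree exceeds that of the family on some other factor is still a minimal family on $X$, and the proposition must cover it. Your degree-splitting inequality $-K_X \cdot C = \sum_i (-K_{X_i} \cdot p_{i*}C)$ gives no information about such a family, so the first step --- ``a general member of $\sK_x$ lies in a factor-fiber'' --- is unjustified for an arbitrary minimal family $\sK$. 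This is the genuine gap.

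The paper's proof avoids degrees entirely and uses the group action: since $\sK_x$ is projective and carries an action of a Borel subgroup $B$ of the isotropy group $\diag(G)$, Borel's fixed point theorem produces an irreducible $B$-stable curve in $\sK_x$, which by Lemma \ref{lem:stable} must be one of the curves $C_{\theta_i}$ attached to the highest roots of the simple factors, hence contained in a single factor $X_i$. All members of the irreducible family $\sK$ are numerically equivalent to $C_{\theta_i}$, hence have degree zero against line bundles pulled back from the factors $X_j$ with $j \neq i$, and so are contracted by the projections to those factors. This also disposes, in one stroke, of the globalization issue you flag at the end (the index $i$ cannot jump as the curve or the base point varies), since numerical equivalence holds on all of $\sK$. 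Your final reduction to Theorem \ref{thm:main}(i) on the factor $X_i$ is fine once containment in a single factor has been established.
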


As an application, we shall show that the wonderful group 
compactification $X$ has the Liouville property if $G$ is simple 
and not of type $A_1$ or $C$. This implies the target rigidity 
property: for any such $X$ and any projective variety $Y$, every 
deformation of a surjective morphism $f: Y \to X$ comes from 
automorphisms of $X$.

This note is organized as follows. In Section \ref{sec:aux},
we collect auxiliary results about rational curves on almost 
homogeneous manifolds. Some special rational curves are investigated
in Section \ref{sec:1ps}, namely, closures of multiplicative or additive 
one-parameter subgroups in the wonderful compactification. 
Section \ref{sec:proofs} contains the proofs of Theorem \ref{thm:main} 
and Proposition \ref{prop:prod}. Some geometric constructions of 
the minimal rational curves are presented in Section \ref{sec:geom}, 
and the above application in the final Section \ref{sec:appl}. 

\medskip

{\em Acknowledgments:} We are very grateful to Jun-Muk Hwang, 
who initiated this problem and also conceived the application. 
We would like to thank St\'ephane Druel, Nicolas Perrin, Colleen Robles
and Nanhua Xi for helpful discussions. Baohua Fu is supported 
by National Scientific Foundation of China (11225106 and 11321101).

\section{Some auxiliary results}
\label{sec:aux}

Throughout this section, $X$ denotes a projective manifold
on which a connected linear algebraic group $G$ acts with an 
open orbit $X^0$, that is, $X$ is almost homogeneous under $G$. 
We choose a base point $x \in X^0$ and denote by 
$H \subset G$ its isotropy group; the corresponding
Lie algebras will be denoted by $\fh \subset \fg$. 
Thus, the orbit $X^0 = G \cdot x$ is identified with the 
homogeneous space $G/H$. Moreover, $H$ acts on 
the tangent space $T_x(X)$ via the isotropy representation, 
identified to the quotient representation $\fp := \fg/\fh$.

We begin with the following observation:

\begin{lemma}\label{lem:gen}
\item{\rm (i)} Let $f: \bP^1 \to X$ be a morphism which is birational 
over its image. If this image meets $X^0$, then $f^*(T_X)$ is generated
by its global sections.

\item{\rm (ii)} Let $\sK$ be a covering family of rational curves on $X$.
Then $\sK_x$ is smooth; moreover, $H$ acts on $\sK_x$ and 
permutes transitively its components.
\end{lemma}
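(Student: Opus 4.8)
For part (i), the key input is a standard fact about vector bundles on $\bP^1$ combined with the almost homogeneity of $X$. First I would pull back the action: since $f(\bP^1)$ meets the open orbit $X^0 = G/H$, we can find a point $t_0 \in \bP^1$ with $f(t_0) \in X^0$, and after translating by an element of $G$ we may assume $f(t_0) = x$. The Lie algebra $\fg$ acts on $X$ by vector fields, giving a map of sheaves $\fg \otimes \sO_X \to T_X$ that is surjective over $X^0$. Pulling back by $f$ yields $\fg \otimes \sO_{\bP^1} \to f^*(T_X)$, which is surjective at $t_0$ and hence surjective over a dense open subset of $\bP^1$. By Grothendieck's theorem, $f^*(T_X) \cong \bigoplus_i \sO_{\bP^1}(a_i)$; the image of the map $\fg \otimes \sO_{\bP^1} \to f^*(T_X)$ is a subsheaf that is generically everything, so it has full rank, and its saturation is all of $f^*(T_X)$. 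Now $\fg \otimes \sO_{\bP^1}$ is globally generated, so its image sheaf $\sF$ is globally generated; but $\sF$ and $f^*(T_X)$ agree on a dense open set and $\sF \subseteq f^*(T_X)$, so each summand $\sO_{\bP^1}(a_i)$ receives a nonzero map from a trivial bundle, forcing $a_i \geq 0$. Hence $f^*(T_X)$ is a direct sum of $\sO_{\bP^1}(a_i)$ with all $a_i \geq 0$, which is globally generated. (Alternatively one notes $f^*(T_X)$ is a quotient of $\fg\otimes\sO_{\bP^1}$ at the generic point, but the cleanest argument is the summand-by-summand degree bound.) The hypothesis that $f$ is birational onto its image is what guarantees $f$ is generically an immersion so that the vector-field map stays surjective along a neighborhood in $\bP^1$, not just at one point — though in fact generic surjectivity already suffices for the degree bound.

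For part (ii), let $\sK$ be a covering family, so $\mu : \sU \to X$ is dominant, hence surjective (properness of $\mu$), and $\sU_x = \mu^{-1}(x)$ is non-empty for general $x$; by homogeneity and the $G$-action we may as well treat the base point $x$. The smoothness of $\sK_x$: a general member of a covering family is a free rational curve (this is \cite[II.3.10]{Kollar} or follows from part (i), since by (i) the normal-type bundle $f^*T_X$ is globally generated, which is exactly freeness), and for a free curve the space $\RatCurves^\n(x,X)$ is smooth at the corresponding point by \cite[II.3.11.5]{Kollar} — but we want smoothness of $\sK_x$ itself at every point, which is precisely the statement recalled in the introduction for $x$ very general; for general $x$ in the open orbit one transports this by the $G$-action. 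More directly: $G$ acts on the whole tower $\sU \to \sK$, $\mu$ is $G$-equivariant, so the isotropy group $H$ acts on $\sU_x = \mu^{-1}(x)$ and on $\sK_x = \rho(\sU_x)$. Since $X^0 = G\cdot x$ is a single orbit and $\mu$ is $G$-equivariant and surjective onto a $G$-stable open set, $G$ acts transitively on the fibers $\mu^{-1}(X^0)$ "horizontally" — more precisely, $\sU^0 := \mu^{-1}(X^0)$ is a single union of $G$-orbits mapping onto $X^0$, and each irreducible component of $\sU^0$ is $G$-stable and dominates $X^0$, so $G$ acts transitively on the set of components of $\sU_x$, hence $H$ does too; applying $\rho$, $H$ permutes transitively the components of $\sK_x$. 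Smoothness of $\sK_x$ then follows because $H$ acts transitively on components and within each component a general point is a smooth point (general member of covering family is free, giving smoothness there by Kollár), and an $H$-homogeneous-on-components variety whose general points are smooth is smooth everywhere (translate a smooth point by $H$ to reach any point of any component, and components of a covering family's fiber over a general point meet transversally or not at all — here one invokes that for very general $x$ the whole $\RatCurves^\n(x,X)$ is smooth and then spreads out).

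The main obstacle is the smoothness claim in (ii): one must be careful to distinguish "general $x$" from "very general $x$" and to argue that the $H$-action upgrades smoothness-at-a-general-point to smoothness-everywhere on each component. I expect the clean route is: (1) by \cite[II.3.11.5]{Kollar} and the discussion in the introduction, $\sK_x$ is smooth for $x$ very general; (2) the locus of $x \in X^0$ for which $\sK_x$ is smooth is $G$-stable (by equivariance) and non-empty, hence — since $X^0$ is a homogeneous space — it is all of $X^0$; (3) the transitivity of $H$ on components is the equivariance statement above. So the real content to write carefully is step (2)'s use of homogeneity plus the equivariant identification $g \cdot \sK_x \cong \sK_{gx}$, together with citing part (i) / \cite[II.3.10--3.11]{Kollar} for the freeness that makes general members of a covering family behave well.
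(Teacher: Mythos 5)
Part (i) of your proposal is essentially the paper's own direct argument: the generically surjective map $\fg\otimes\sO_{\bP^1}\to f^*(T_X)$ coming from the action, the Grothendieck decomposition, and the vanishing of $H^0(\bP^1,\sO_{\bP^1}(n))$ for $n<0$ forcing every summand to have nonnegative degree. Nothing to add there.

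Part (ii) has two genuine gaps. The transitivity deduction is a non sequitur as written: from ``each irreducible component of $\sU^0$ is $G$-stable and dominates $X^0$'' it does not follow that $H$ permutes the components of $\sU_x$ transitively --- if $\sU^0$ had two $G$-stable components, each would contribute its own $H$-stable collection of components of the fiber, and $H$ could never interchange them. (Also, $G$ itself does not act on $\sU_x$, only $H$ does.) What is needed, and what the paper uses, is that $\sU^0$ is \emph{irreducible}: by the conventions of the introduction, $\sK$ is an irreducible component of $\RatCurves^\n(X)$, so $\sU$ is a $\bP^1$-bundle over an irreducible base and $\sU^0=\mu^{-1}(X^0)$ is a nonempty open subset of it. Writing $\sU^0$ as the associated bundle $G\times^H\sU_x$ over $X^0=G/H$, irreducibility of the total space forces $H$ to act transitively on the set of components of the fiber $\sU_x$, hence of $\sK_x$.

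For smoothness, the step ``translate a smooth point by $H$ to reach any point of any component'' is false: $H$ acts transitively on the \emph{set of components} of $\sK_x$, not on the points of a given component, so smoothness at a general point cannot be propagated this way. Your fallback --- smoothness for very general $x$ via \cite[II.3.11.5]{Kollar} plus $G$-stability of the locus of good $x$ --- is closer in spirit, but that reference (as recalled in the introduction) yields smoothness of the normalization $\sK_x^\n$, a union of components of $\RatCurves^\n(x,X)$, not of $\sK_x$ itself, so an extra identification would be required. The paper's route avoids both problems: by (i) every curve of the family meeting $X^0$ is free, hence by \cite[II.3.5.4, II.2.15]{Kollar} the variety $\sK$ is smooth at \emph{every} curve meeting $X^0$, not just at general ones; consequently $\sU^0$ is smooth, and the same associated-bundle description $\sU^0\cong G\times^H\sU_x$ transfers smoothness to the fiber $\sU_x$ and then to $\sK_x$. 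That single structural fact, which your write-up never isolates, delivers both halves of (ii) at once.
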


\begin{proof}
(i) follows from \cite[II.3.11]{Kollar}. We give a direct proof in this particular case.
  The $G$-action on $X$ yields a map of sheaves 
$\sO_X \otimes \fg \to T_X$, which is surjective over $X^0$. 
So we obtain a generically surjective map 
$\sO_{\bP^1} \otimes \fg \to f^*(T_X)$. This yields the
assertion by using the fact that every vector bundle on $\bP^1$
is a direct sum of line bundles $\sO_{\bP^1}(n)$, and the
vanishing of $H^0(\bP^1, \sO_{\bP^1}(n))$ for $n < 0$.

(ii) Let $f$ as in (i). Then $f$ is free in the sense of 
\cite[II.3.1]{Kollar}; it follows that $\Hom(\bP^1,X)$ is smooth 
at $f$, by \cite[II.3.5.4]{Kollar}. Thus, $\RatCurves^\n(X)$ 
is smooth at its point corresponding to $f$, in view of 
\cite[II.2.15]{Kollar}. As a consequence, $\sK$ is smooth
at any curve meeting $X^0$.

Consider the universal family $\sU$ with maps $\rho : \sU \to \sK$,
$\mu : \sU \to X$ as in the introduction. Then $G$ acts on $\sU$
and $\sK$; moreover, $\rho$ and $\mu$ are equivariant. Thus, 
$\sU^0 := \mu^{-1}(X^0)$ (resp. $\sK^0 := \rho(\sU^0)$) 
is an open $G$-stable subset of $\sU$ (resp. $\sK$). 
Moreover, the restriction 
$\rho^0 : \sU^0 \to \sK^0$ is a $\bP^1$-bundle. Since $\sK^0$ is 
smooth, so is $\sU^0$. But $\sU^0$ is the associated fiber bundle
$G \times^H \sU_x$ over $X^0 = G/H$. Thus, $\sU_x$ is smooth,
and hence so is $\sK_x$ since $\sU_x \to \sK_x$ is a 
$\BP^1$-bundle. Also, $\sU^0$ is irreducible; thus,
$H$ acts transitively on the set of components of
$\sU_x$, or equivalently of $\sK_x$.
\end{proof}

Next, we obtain a key technical result:

\begin{lemma}\label{lem:curves}
Assume that $H$ is reductive and the isotropy representation 
$\fp$ is a multiplicity-free sum of irreducible representations 
of $\fh$ with linearly independent highest weights 
(e.g., $\fp$ is irreducible as a representation of $\fh$).
Let $C \subset X$ be an irreducible curve through $x$,
stable under a Borel subgroup $B \subset H$, and set 
$C^0 := C \cap X^0$. Then

\item{\rm (i)} $C^0$ is smooth and $B$-equivariantly isomorphic 
to its tangent line $L$ at $x$, which is a highest weight line
in $\fp$. Moreover, $L$ determines $C$ uniquely, and the stabilizer 
of $C$ in the neutral component $H^0$ equals the stabilizer of $L$. 

\item{\rm (ii)} There exists an additive one-parameter subgroup 
$u : \bG_a \to G$ such that $C = \overline{u(\bG_a) \cdot x}$.
\end{lemma}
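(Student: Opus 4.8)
The plan is to analyze the structure of a $B$-stable irreducible curve $C$ through $x$ directly via the $B$-action on the open part $C^0 = C \cap X^0 \subset G/H$. First I would observe that $B$ acts on the smooth curve $C^0$ with a fixed point (namely $x$, since $B \subset H$ fixes $x$), so $B$ acts on the tangent line $T_x(C^0) =: L \subset T_x(X) = \fp$; since $B$ is solvable and connected, it acts on the line $L$ through a character, so $L$ is a $B$-eigenline, hence a highest weight line for some irreducible constituent of $\fp$ (using reductivity of $H$ and that a $B$-eigenvector in a representation of $H$ is a highest weight vector). The multiplicity-free hypothesis with linearly independent highest weights is what pins down which constituent: it guarantees that the $B$-eigenlines in $\fp$ are exactly the finitely many highest weight lines, one in each irreducible summand, and that distinct summands have distinct characters.

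Next I would promote this infinitesimal statement to a global one. The orbit map $B \to C^0$, $b \mapsto b \cdot x$, together with the eigenline structure, should let me build an explicit isomorphism $L \cong C^0$. Concretely: pick a root $\alpha$ of $B$ (relative to a maximal torus $T \subset B$) such that the corresponding root subgroup $U_\alpha$ moves $x$ — this exists precisely because $L$ is a highest weight line of weight, say, $\lambda$, and the $U_\alpha$ for $\alpha$ in the support are the ones whose derivative hits $L$; actually the cleanest route is to show $C^0$ is a single orbit of a well-chosen one-dimensional unipotent subgroup. Since $C^0$ is an irreducible $B$-stable curve with a fixed point $x$ and is not a single point, and since $T$ fixes $x$ and normalizes each $U_\alpha$, the curve $C^0$ must be the orbit closure of a one-parameter unipotent subgroup whose tangent at $x$ is $L$; the weight $\lambda$ being the highest weight of an irreducible constituent forces this unipotent subgroup to be (up to conjugacy by $T$) a single root subgroup $U_{\alpha}$ — here one uses that the $T$-weight on $L$ determines things, and that for a non-root weight the relevant unipotent orbit would be higher-dimensional. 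This simultaneously gives (i) the $B$-equivariant isomorphism $C^0 \cong L \cong \A^1$ with $B$ acting linearly (through the character $\lambda$ on $L$), (ii) that $L$ determines $C$ (since $C = \overline{C^0}$ and $C^0$ is recovered as the $U_\alpha$-orbit of $x$, which is determined by $L$ via $\alpha$), and the stabilizer statement (the $H^0$-stabilizer of $C$ equals that of $C^0 = U_\alpha \cdot x$, which equals the $H^0$-stabilizer of its tangent direction $L$, since an element of $H^0$ fixing $L$ and $x$ must preserve the $U_\alpha$-orbit). For part (ii), the additive one-parameter subgroup is exactly $u : \bG_a \xrightarrow{\sim} U_\alpha \hookrightarrow G$, and $C = \overline{u(\bG_a) \cdot x}$ by construction.

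The main obstacle I anticipate is the passage from "$L$ is a $B$-eigenline, hence a highest weight line" to "$C^0$ is literally the orbit of a single root subgroup with tangent $L$": a priori $C^0$ could be the orbit of a more complicated one-dimensional unipotent subgroup $U \subset B$ whose Lie algebra $\fu$ is a line in $\fn$ (the nilradical of $\fb$) not equal to a root space, and whose image in $\fp$ is $L$. I would rule this out by a weight argument: the $T$-action on $C^0$ (which $T$ stabilizes, fixing $x$) makes $C^0 \setminus \{x\}$ a union of $T$-orbits, and since $C^0$ is one-dimensional and $T$ fixes $x$, either $T$ acts trivially on $C^0$ or $C^0 \setminus \{x\}$ is a single $T$-orbit isomorphic to $\bG_m$; in the latter (generic) case the $T$-weight appearing is $\lambda$, and the closure being $C$ forces the stabilizer and the orbit structure; the key point is that $\lambda$, being the highest weight of an irreducible $\fh$-module appearing with multiplicity one, is a weight of $\fg$ that is "extremal" enough that the corresponding unipotent one-parameter subgroup in $G$ reaching it along $C^0$ must be a root subgroup. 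I would also need to handle the edge case where $C^0$ might be closed in $X^0$ but I expect properness of $C$ plus irreducibility to force $x$ to be the unique $B$-fixed point in the closure, keeping the argument uniform. The multiplicity-freeness and linear independence of highest weights is used precisely to guarantee uniqueness of $C$ given $L$ and to ensure the map $\{B\text{-stable irreducible curves through }x\} \to \{\text{highest weight lines in }\fp\}$ is injective with the expected image.
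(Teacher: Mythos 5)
There is a genuine gap, and it starts with a misplacement of the group that sweeps out the curve. You propose that $C^0$ is the orbit of $x$ under a root subgroup $U_\alpha\subset B$ (or a one-dimensional unipotent subgroup of $B$ with Lie algebra in the nilradical of $\fb$). But $B$ is a Borel subgroup of $H$, and $H$ is by definition the isotropy group of $x$; every subgroup of $B$ fixes $x$, so $U_\alpha\cdot x=\{x\}$ is never a curve. The correct statement (part (ii) of the lemma) is that $C=\overline{u(\bG_a)\cdot x}$ for an additive one-parameter subgroup $u:\bG_a\to G$ whose derivative is a nilpotent $B$-eigenvector $\xi\in\fg^U_\lambda$ lifting a highest weight vector of $\fp=\fg/\fh$ under the projection $\fg\to\fp$ (surjective on $U$-invariants of weight $\lambda$ because $H$ is reductive); such a $\xi$ lies outside $\fh$. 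Note also that in the generality of this lemma $G$ is only a connected linear algebraic group and $T$ need not be a maximal torus of $G$, so ``root subgroup of $G$'' is not even available; what the paper actually finds inside $H$ is the opposite picture from yours: the unipotent part $U$ of $B$ acts \emph{trivially} on $C^0$ and the open orbit in $C^0$ is a $T$-orbit.

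The second gap is that you assume from the outset that $C^0$ is smooth at $x$ with a well-defined tangent \emph{line} $L$, and your proposed ``weight argument'' to upgrade the $T$-orbit structure of $C^0\setminus\{x\}$ to a $B$-equivariant isomorphism $C^0\cong L$ does not close: a $T$-stable irreducible affine curve with dense $T$-orbit and fixed point can be a singular monomial curve (e.g.\ $t\mapsto(t^2,t^3)$ in a $2$-dimensional $T$-module with linearly \emph{dependent} weights), so some hypothesis must be brought to bear precisely here. The paper's mechanism is Luna's slice theorem: one first shows $B$ and then $T$ act nontrivially on $C$, deduces $x\in\overline{T\cdot y}$ for $y$ in the open orbit, hence $C^0$ lies in the null-fiber of $G/H\to H^0\backslash\!\backslash G/H$, which is $H^0$-equivariantly the nilcone $\sN\subset\fp$; then one shows $U$ fixes $C^0$ pointwise, so $C^0\subset\fp^U$, a multiplicity-free $T$-module with linearly independent weights, forcing $C^0$ to be literally a highest weight line. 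This linearization step is what simultaneously yields smoothness, the equivariant isomorphism $C^0\cong L$, the uniqueness of $C$ given $L$, and the equality of stabilizers; it is the missing idea in your proposal, and it is also where the linear-independence hypothesis does its real work (you only invoke it for injectivity of $C\mapsto L$).
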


\begin{proof}
(i) We first claim that $C$ is not fixed pointwise by $B$. Indeed,
the fixed point locus $(G/H)^B$ equals $(G/H)^{H^0}$,
since $H^0/B$ is complete and $G/H$ is affine. Moreover,
since $H^0$ is reductive, $(G/H)^{H^0}$ is smooth and 
$T_x(G/H)^{H^0} = (\fg/\fh)^{H^0} = \fp^{\fh}$. The latter vanishes,
since the highest weights of $\fh$ in $\fp$ are all nonzero. 
Thus, $x$ is an isolated fixed point of $(G/H)^B$; this implies 
our claim.

By that claim, $B$ has an open orbit in $C^0$, say $B \cdot y$.
Thus, the isotropy subgroup $B_y \subset B$ has codimension $1$.

Next, we choose a maximal torus $T \subset B$; then
$B = T U$, where $U \subset B$ denotes the unipotent part.
We claim that $C$ is not fixed pointwise by $T$.
Otherwise, $T \subset B_y$ and hence $B_y = T U_y$, 
where $U_y \subset U$ is a subgroup of codimension $1$, normalized
by $T$. Moreover, $T$ acts trivially on $B/B_y \cong U/U_y$,
and hence on its tangent space at the base point.
It follows that $T$ has a non-zero fixed point in a quotient
of the Lie algebra of $U$. But this is impossible, since $H$ is
reductive.

Therefore, we may assume that $T \cdot y$ is open in $C^0$. 
Since $x \in C^0$ is $T$-fixed and $C^0$ is affine, it follows 
that $x \in \overline{T \cdot y}$;
in particular, $x \in \overline{H^0 \cdot y}$. 
So $C^0$ is contained in the fiber at $x$ of the 
geometric invariant theory quotient 
$G/H \to H^0 \backslash \! \backslash G/H$. 
By \cite[II.1, III.1]{Luna}, this fiber is 
$H^0$-equivariantly isomorphic to the nilcone $\sN$ of $\fp$ 
(the fiber at $0$ of the quotient $\fp \to \fp / \! / H^0$). 
Thus, we may identify $C^0$ with a $B$-stable curve $L$ 
in $\sN$. Moreover, $C$ and $L$ have the same stabilizer
in $H$.

If $U$ fixes $C^0$ pointwise, then $C^0$ is contained in $\fp^U$.
The latter is a multiplicity-free representation of $T$
with linearly independent weights. It follows that $C^0$
is a highest weight line in $\fp$.

We may thus assume that $U$ acts nontrivially on $C$;
equivalently, $B_y$ does not contain $U$. But $B_y$ has codimension 
$1$ in $B$, and hence contains a maximal
torus of $B$; so we may assume that $y$ is fixed by $T$.
Then $x \notin \overline{T \cdot y}$, a contradiction.

(ii) Denote by $\lambda$ the weight of $B$ in $L$; then
$L$ is the weight space $\fp^U_\lambda$. Since $H$ is reductive,
the projection $\fg \to \fg/\fh = \fp$ induces a surjective
map $\fg^U_\lambda \to \fp^U_\lambda$. So we may choose
$\xi \in \fg^U_\lambda$ whose projection spans $L$. Since 
$\lambda \neq 0$, we see that $\xi$ is nilpotent, and hence
defines an additive one-parameter subgroup $u : \bG_a \to G$
such that $b \, u(t) \, b^{-1}  = u(\lambda(b)t)$ for all $b \in B$ and 
$t \in \bG_a$. Thus, $\overline{u(\bG_a) \cdot x}$ is 
an irreducible $B$-stable curve with tangent line $L$ at $x$.
By uniqueness, this curve equals $C$. 
\end{proof}

\begin{remark}\label{rem: sym}
One may check that the assumptions of Lemma \ref{lem:curves} 
hold for any symmetric space $G/H$ of adjoint type, i.e., 
$G$ is semisimple of adjoint type and 
$H$ is the fixed point subgroup of an involutive automorphism of 
$G$. But these assumptions are not satisfied when $X$ is a toric 
variety (then $G$ is a torus and $H$ is trivial).
\end{remark}

\begin{lemma}\label{lem:smooth}
Consider a representation $V$ of the additive group $\bG_a$,
and a point $x \in \bP(V)$ which is not fixed by $\bG_a$.
Then the orbit map $\bG_a \to X$, $t \mapsto t \cdot x$ 
extends to an isomorphism 
$f: \bP^1 \to \overline{\bG_a \cdot x}$.
\end{lemma}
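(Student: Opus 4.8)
The plan is to analyze the $\bG_a$-action explicitly using the fact that a representation of $\bG_a$ is the same as a single nilpotent endomorphism. First I would write the action of $t \in \bG_a$ on $V$ as $\exp(tN)$ for some nilpotent operator $N \in \End(V)$, so that $t \cdot v = \sum_{k \geq 0} \tfrac{t^k}{k!} N^k v$ is a polynomial in $t$ of degree at most $\dim V - 1$. Lifting the point $x \in \bP(V)$ to a nonzero vector $v_0 \in V$, the orbit map becomes $t \mapsto [\,\exp(tN) v_0\,]$, i.e. a morphism $\A^1 \to \bP(V)$ given by a vector of polynomials in $t$. Since $x$ is not $\bG_a$-fixed, this orbit map is nonconstant, hence finite onto its image, and the image's closure $\overline{\bG_a \cdot x}$ is a (possibly singular) rational curve in $\bP(V)$.

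Next I would show the orbit map is actually an isomorphism onto its image and extends over $\infty$. The cleanest way is to produce explicit affine coordinates: let $d \geq 1$ be the largest integer with $N^d v_0 \neq 0$, and consider the linear functional $\ell$ on $V$ extracting the coefficient of $N^d v_0$ in a basis adapted to the $N$-string through $v_0$ (or more robustly, work in the two-dimensional subspace spanned by $v_0$ and $N^d v_0$ after projecting). Then $t \mapsto [\exp(tN)v_0]$, composed with the projection to the $\bP^1$ determined by this pair, has the form $t \mapsto [\,1 : p(t)\,]$ where $p(t)$ has degree exactly $d$ in $t$; more precisely one can arrange, by choosing the right linear projection $V \dashrightarrow \C^2$, that the composite is $t \mapsto [1 : t^d]$ after a further normalization — but rather than fight for the simplest normal form, it suffices to exhibit one coordinate of the orbit that is a nonconstant polynomial $q(t)$ of degree $d$ whose leading behavior controls the point at infinity. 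Since $\overline{\bG_a \cdot x}$ is an irreducible projective rational curve with a dense open subset parametrized injectively by $\A^1$ (injectivity because $\exp(tN)v_0$ and $\exp(sN)v_0$ are proportional only if $s = t$, as $\exp((s-t)N)$ fixes the line $[v_0]$ only for $s = t$ given $Nv_0 \notin \C v_0$, which holds since the orbit is nonconstant... wait, $Nv_0$ could be a multiple of $v_0$; but then $N v_0 = c v_0$ forces $c = 0$ as $N$ is nilpotent, contradicting nonconstancy, so indeed $Nv_0 \notin \C v_0$), the normalization of this curve is $\bP^1$ and the map $\A^1 \to \overline{\bG_a \cdot x}$ factors through a bijective morphism from an open subset of $\bP^1$.

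Finally, to promote this to a genuine isomorphism $f : \bP^1 \to \overline{\bG_a \cdot x}$ I would invoke smoothness: a $\bG_a$-orbit is always smooth (it is a homogeneous space), so $\bG_a \cdot x$ is a smooth affine curve, its complement in the closure is a single point (since the closure is an irreducible projective rational curve and the orbit is its dense open subset obtained by removing finitely many points, but an orbit of $\bG_a \cong \A^1$ removed from $\bP^1$ leaves exactly one point by the degree/parametrization count above), and one checks the closure is smooth at that boundary point as well by the explicit local form $[1 : t^d]$ near $\infty$ becoming $[s^d : 1]$ near $s = 1/t = 0$, which is a smooth point. A smooth projective rational curve is $\bP^1$, and the bijective morphism $\bP^1 \to \overline{\bG_a \cdot x}$ between smooth curves is then an isomorphism. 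I expect the main obstacle to be the bookkeeping in choosing the right linear projection $V \dashrightarrow \C^2$ so that the orbit closure visibly acquires a single smooth point at infinity — equivalently, verifying that $\overline{\bG_a \cdot x} \setminus (\bG_a \cdot x)$ is reduced to one point and the curve is unibranch there; this is where one genuinely uses that $N$ is nilpotent (so the orbit map is polynomial, not merely rational, ruling out extra branches or multiple points at infinity).
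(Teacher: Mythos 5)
Your overall strategy --- show $f$ is bijective and smooth at every point, including $\infty$, and conclude it is an isomorphism --- is the same as the paper's, and writing the action as $\exp(tN)$ for a nilpotent $N$ is equivalent to the paper's decomposition of $V$ into indecomposables $\C[x,y]_n$. Your injectivity argument is fine: with $d$ maximal such that $N^d v_0 \neq 0$, the vectors $v_0, Nv_0, \dots, N^d v_0$ are linearly independent, so $\exp(uN)v_0 \in \C v_0$ forces $u=0$, and $f(\infty) = [N^d v_0]$ is distinct from every $f(t)$ with $t$ finite.

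The gap is in the smoothness at $\infty$. The projection you propose, onto the plane spanned by $v_0$ and $N^d v_0$, sends the orbit to $t \mapsto [1 : t^d/d!]$, i.e.\ near $\infty$ to $s \mapsto [d!\,s^d : 1]$. For $d \geq 2$ this is \emph{not} an immersion at $s=0$: its image is the smooth curve $\bP^1$, but smoothness of the image of a linear projection says nothing about smoothness of the curve upstairs. Compare $t \mapsto [1:t^2:t^3]$, whose projection to the first two coordinates is $[1:t^2]$ with smooth image even though the curve has a cusp at $[0:0:1]$; and note that ``unibranch at the boundary point,'' which you list as the remaining thing to verify, is also insufficient, since a cusp is unibranch. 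What you need is a projection under which the extended map \emph{is} an immersion at $s=0$; the chain rule then forces $f$ itself to be an immersion there. The right choice is the pair of \emph{top two} coefficients of the polynomial $\exp(tN)v_0 = \sum_{k \leq d} \frac{t^k}{k!} N^k v_0$: since $N^d v_0$ and $N^{d-1} v_0$ are linearly independent, the rescaled map $s^d \exp((1/s)N)v_0 = \frac{1}{d!}N^d v_0 + \frac{s}{(d-1)!}N^{d-1}v_0 + O(s^2)$ has nonvanishing derivative at $s=0$ modulo the line $\C N^d v_0$. This is exactly the computation the paper performs in the model $V_n = \C[x,y]_n$, where the coefficients of $t^n$ and $t^{n-1}$ in $t \cdot v$ are $a_n x^n$ and $n a_n x^{n-1}y + a_{n-1}x^n$, visibly linearly independent. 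With that substitution your argument closes up and coincides with the paper's.
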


\begin{proof}
Clearly, $f$ is bijective, and smooth at every $t \in \bG_a$.
So it suffices to show that $f$ is smooth at $\infty$ as well.

Choose a representative $v \in V$ of $x$. We may assume that
$V$ is spanned by the orbit $\bG_a \cdot v$. Also, $V$ is a
direct sum of indecomposable representations of $\bG_a$,
and these are of the form $\C[x,y]_n$ (the space of homogeneous
polynomials of degree $n$ in $x$,  $y$), where $\bG_a$ acts via
$(t \cdot f)(x,y) := f(x, tx + y)$. Using a suitable projection,
we may assume that $V = V_n$; then $v = a_0 x^n + \cdots + a_n y^n$,
where $a_n \neq 0$. So 
$t \cdot v = (a_n x^n) t^n + (na_n x^{n-1} y + a_{n-1} x^n) t^{n-1} + \ldots$
In particular, the coefficients of $t^n$ and $t^{n-1}$ in 
$t \cdot v$ are both non-zero; this yields our assertion.
\end{proof}

\begin{lemma}\label{lem:fix}
Let $Y$ and $Z$ be projective varieties equipped with an action 
of $G$, and let $f: Y \to Z$ be a finite $G$-equivariant morphism. 
Assume that $Z$ contains a unique closed $G$-orbit $Z_{\min}$
and is equivariantly isomorphic to a subvariety of $\bP V$ 
for some finite-dimensional representation $V$ of $G$. Then 
$Y$ contains a unique closed $G$-orbit $Y_{\min}$ as well.
Moreover, $Y_{\min}$ is isomorphic to $Z_{\min}$ via $f$.
\end{lemma}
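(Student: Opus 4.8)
The plan is to produce one closed orbit of $Y$ lying over $Z_{\min}$, to show that $f$ carries it isomorphically onto $Z_{\min}$, and then to deduce uniqueness by reducing it to a statement about Borel-fixed points and invoking the embedding in $\bP V$. From the start we may assume $f$ is surjective, after replacing $Z$ by $f(Y)$: this is again an irreducible projective $G$-variety lying in $\bP V$, and its unique closed $G$-orbit is still $Z_{\min}$, since any closed $G$-orbit of $f(Y)$ is closed in $Z$.

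\textbf{Existence and the isomorphism.} Since $Y$ is projective and $G$ connected, the Borel fixed point theorem gives a point fixed by a Borel subgroup $B\subseteq G$; its isotropy group contains $B$, hence is parabolic, so its orbit is complete, hence closed. Thus $Y$ has a closed orbit $O$, and $f(O)$, being a closed $G$-orbit in $Z$ (the image of a closed set under the proper morphism $f$), equals $Z_{\min}$. Now $f|_O\colon O\to Z_{\min}$ is a finite, surjective, $G$-equivariant morphism between complete homogeneous spaces; writing $O=G/H$, $Z_{\min}=G/H'$ with $H\subseteq H'$, finiteness forces $[H':H]<\infty$, so $f|_O$ is a finite étale covering. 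A complete homogeneous space under a connected linear algebraic group is the flag variety of a reductive quotient, hence is simply connected (it has a cell decomposition with cells of even real codimension only); since the cover is then trivial and $O$ is connected, $f|_O$ is an isomorphism. This gives the existence of $Y_{\min}$ and the ``moreover'' part, once uniqueness is known.

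\textbf{Uniqueness.} First, every $B$-fixed point of $Z$ lies on a closed, hence unique, orbit, so $Z^B\subseteq Z_{\min}$; and $Z_{\min}=G/H'$ with $H'$ parabolic has a single $B$-fixed point $z^{*}$, so $Z^B=\{z^{*}\}$. Similarly, each $y\in Y^B$ generates a closed orbit $G\cdot y$, a flag variety on which $y$ is the unique $B$-fixed point, so distinct points of $Y^B$ generate distinct closed orbits; since $f(Y^B)\subseteq Z^B=\{z^{*}\}$ and $f$ is finite, $Y^B$ is finite and in natural bijection with the set of closed orbits of $Y$. It therefore suffices to prove $|Y^B|=1$. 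Here the embedding $\iota\colon Z\hookrightarrow\bP V$ is used: choosing $v_0\in V$ over $z^{*}$ (a highest weight vector, of weight $\lambda$) and replacing $V$ by the $G$-submodule $\langle\widehat Z\rangle$ generated by the affine cone over $Z$, one checks — using $Z^B=\{z^{*}\}$ — that $\lambda$ is the unique maximal weight occurring, so that for an anti-dominant regular cocharacter $\gamma\colon\bG_m\to G$ the functional $\langle\gamma,\cdot\rangle$ attains its minimum on the weights of $V$ only at $\lambda$. Consequently $\gamma(\bG_m)$ fixes $z^{*}$ and $\lim_{t\to0}\gamma(t)\cdot z=z^{*}$ for all $z=[v]\in Z$ with $v_\lambda\neq0$, a dense open subset $Z^{0}\ni z^{*}$ of $Z$. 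Pulling this back by the finite surjective $f$, the set $f^{-1}(Z^{0})$ is dense and open in the irreducible variety $Y$, and for $y$ there the limit $\lim_{t\to0}\gamma(t)\cdot y$ exists and lies in the finite set $f^{-1}(z^{*})$; moreover $f^{-1}(Z^{0})$ meets each closed orbit $O_i$ of $Y$ in a dense open subset (since $z^{*}\in Z^{0}\cap Z_{\min}$), and on $O_i\cong Z_{\min}$ the unique $B$-fixed point $y_i$ is the ``source'' of the $\gamma$-action. Comparing the Bia\l ynicki--Birula-type decompositions of $Y$ and of the $O_i$ under $\gamma$, and using that an irreducible variety has a single open source cell, one concludes $|Y^B|=1$; so $Y$ has a unique closed orbit $Y_{\min}$, carried isomorphically onto $Z_{\min}$ by $f$.

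\textbf{Main obstacle.} The delicate point is this last step: passing from ``$\gamma$ contracts a dense open subset of $Z$ onto the single $B$-fixed point $z^{*}$'' to ``$Y$ has a unique $B$-fixed point''. The difficulty is that a closed orbit $O_i$ may lie entirely outside the open source cell of $Y$, so one cannot simply intersect cells; the argument must use that \emph{all} the $O_i$ carry isomorphic, compatibly $\gamma$-equivariant flag-variety structures. A clean framework for this is to work with the affine cone $\widehat Y$ over $Y$ attached to $L=(\iota f)^{*}\mathcal O_{\bP V}(1)$: it is a $G\times\bG_m$-variety whose only $\bG_m$-fixed point is the vertex (because $Y$ is irreducible and proper, so $H^0(Y,\mathcal O_Y)=\C$), the induced finite morphism $\widehat Y\to V$ sends the vertex to the origin, and each closed $G$-orbit of $Y$ lifts to a $G\times\bG_m$-stable subcone of $\widehat Y$ through the vertex mapping isomorphically onto the (normal) cone over $Z_{\min}$; this is the natural setting in which to run the $\bG_m$-flow argument above and count these subcones.
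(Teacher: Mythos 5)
Your existence argument and the identification $f|_{Y_{\min}}\colon Y_{\min}\stackrel{\cong}{\to} Z_{\min}$ are correct and essentially match the paper's (the paper uses connectedness of parabolic subgroups where you use simple connectedness of flag varieties). Your reduction of uniqueness to $|Y^B|=1$, via the bijection between $B$-fixed points and closed orbits, and your proof that $Z^B$ is a single point, are also correct. But the decisive step --- that $Y^B$ is a single point --- is not established, and you flag this yourself in the ``main obstacle'' paragraph. The $\bG_m$-flow argument shows only that a dense open subset of $Y$ (assuming $Y$ irreducible, which is not part of the hypotheses and is in fact close to what the lemma is meant to deliver in its application to $\sK_x$) is attracted to the finite set $f^{-1}(z^{*})$; it does not rule out further $B$-fixed points $y_2,\dots,y_k$ whose closed orbits $O_i$ lie entirely outside the open attracting cell. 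The attracting set of $y_i$ inside $O_i$ is dense in $O_i$ but may be negligible in $Y$ (compare $\infty\in\bP^1$ under the standard $\bG_m$-action: a fixed point whose attracting set is just itself), so ``comparing the Bia\l ynicki--Birula decompositions'' produces no contradiction. In addition, $Y$ is only finite over a subvariety of $\bP V$ and may be singular and non-normal, so the constructibility and cell structure of the attracting sets you invoke would themselves need justification. The closing ``affine cone framework'' is a plan, not an argument, and counting $G\times\bG_m$-stable subcones through the vertex still requires exactly the connectedness input you are missing.

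The paper closes this gap with a theorem of Horrocks: the fixed locus of a connected unipotent group acting on a connected projective scheme is connected. Taking $U\subset B$ the unipotent radical, one first shows $Z^U=\{z\}$: the locus $Z^U$ is connected, $T$-stable and $T$-equivariantly embedded in $\bP V$ with $(Z^U)^T=Z^B=\{z\}$, and the closure of any non-fixed $T$-orbit in $\bP V$ contains at least two $T$-fixed points. Then $Y^U\subseteq f^{-1}(z)$ is finite, and being connected it is a single point $y$; hence $Y^B=\{y\}$ and $Y$ has a unique closed orbit. Note that the equivariant embedding hypothesis is used only for $Z$, and that Horrocks' theorem entirely replaces your $\bG_m$-flow. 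To salvage your route you would need a substitute for this connectedness statement; a finite map to $Z$ together with $Z^B=\{z^{*}\}$ alone does not control the number of $B$-fixed points of $Y$.
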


\begin{proof}
Choose a Borel subgroup $B \subset G$. Then the fixed point locus 
$Z^B \subset Z$ consists of a unique point, $z \in Z_{\min}$.
Indeed, for any $x \in Z^B$, the stabilizer $G_x$ is a parabolic
subgroup of $G$, hence the orbit $G \cdot x$ is closed and
$(G \cdot x)^B$ consists of the point $x$.

Denote by $U \subset B$ the unipotent part, and consider the 
fixed point locus $Z^U$. Then $Z^U$ is connected by 
\cite[Thm.~6.2]{Horrocks}; it is also $B$-stable, and $B$ 
acts on $Z^U$ via its quotient torus $T = B/U$. 
Moreover, $(Z^U)^T = Z^B$ consists of $z$ only.
As $Z^U$ is $T$-equivariantly isomorphic to a subvariety of 
$\bP V$, it follows that $Z^U$ is just the point $z$ (indeed, 
for any $x \in \bP V$ which is not fixed by $T$, the orbit closure 
$\overline{T \cdot x}$ contains at least $2$ fixed points of $T$). 

Next, consider $Y^U$: it is also connected, 
and finite since so is $f$. Thus, $Y^U$ is a unique point, say $y$.
So $Y^B$ is the point $y$ as well, and $Y$ contains a unique
closed $G$-orbit, namely, $Y_{\min} = G \cdot y$. Moreover, $f$
restricts to a finite equivariant map $Y_{\min} \to Z_{\min} = G \cdot z$.
Since $G_z$ (a parabolic subgroup of $G$) is connected, this implies 
the assertion.
\end{proof}

We now come to the main result of this section:

\begin{proposition}\label{prop:irr}
Assume that $H$ is reductive, and the representation of 
$\fh$ in $\fp$ is irreducible and non-trivial. Then

\item{\rm (i)} There exists a unique family of minimal rational curves
$\sK$ on $X$.

\item{\rm (ii)} $\sK_x$ contains a unique closed orbit of $H^0$; 
it is isomorphic to the orbit of the highest weight line in $\fp$. 

\item{\rm (iii)} $\sK_x$ is smooth and connected.

\item{\rm (iv)} $\mu \times \rho : \sU \to X \times \sK$ is a
closed immersion (i.e., $\sK$ consists of embedded curves). 
\end{proposition}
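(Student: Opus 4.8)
The plan is to build everything on Lemma \ref{lem:curves}, which under the stated hypotheses (here $\fp$ is irreducible and non-trivial, so in particular multiplicity-free with a single highest weight) says that a $B$-stable irreducible curve $C$ through $x$ is unique, determined by its tangent line $L = \fp^U_\lambda$, the highest weight line in $\fp$; moreover $C^0$ is the image of an additive one-parameter subgroup. First I would establish (i): let $\sK$ be any covering family of rational curves on $X$ (the existence of at least one follows from $X$ being uniruled, applied to a component of $\RatCurves^\n(X)$ of minimal anticanonical degree, which is automatically a family of minimal rational curves). By Lemma \ref{lem:gen}(ii), $\sK_x$ is smooth, $H$ acts on it, and $H$ permutes its components transitively. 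Since $\sK_x$ is projective, it contains a closed $B$-orbit, hence a $B$-fixed point, corresponding to a $B$-stable rational curve $C$ through $x$; note $C$ meets $X^0$ since the general member of a covering family does, so $C$ is not contracted and $C^0$ is a genuine curve. By Lemma \ref{lem:curves}(i), $C$ is unique — independent of $\sK$ — with tangent direction the fixed highest weight line $\ell := [L] \in \bP\fp$. Two families of minimal rational curves $\sK$ and $\sK'$ therefore both contain this same curve $C$; since $C$ is free (Lemma \ref{lem:gen}(i)) and $\RatCurves^\n(X)$ is smooth at $[C]$, the two families share a component through $[C]$, hence coincide. This gives uniqueness.

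Next, for (ii) and (iii): the closed $H^0$-orbits in $\sK_x$ correspond to $B$-fixed points of $\sK_x$, hence to $B$-stable curves through $x$, of which there is exactly one, namely $C$; so $\sK_x$ has a unique closed $H^0$-orbit, and it is $H^0\cdot[C]$. Under the tangent map $\tau: \sK_x \dashrightarrow \bP\fp$ — or better, under the normalization-level map to $\sC_x$ from \cite{HM2, Kebekus} — this orbit maps to $H^0 \cdot \ell$, the $H^0$-orbit of the highest weight line, which by general theory (the closed orbit is the orbit of the highest weight line for an irreducible representation) is the unique closed orbit in $\bP\fp$ (at least its intersection with the nilcone; irreducibility of $\fp$ ensures this is the closed $H^0$-orbit). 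Since $\tau$ restricted to this orbit is injective — distinct curves in the $H^0$-orbit of $C$ have distinct tangent lines because, by Lemma \ref{lem:curves}(i), $L$ determines $C$ and the stabilizers agree — we get $\sK_x \supset H^0\cdot[C] \cong H^0 \cdot \ell = (\bP\fp)_{\min}$. For connectedness, $\sK_x$ is smooth, and $H$ permutes its finitely many components transitively while $H^0$ fixes each; but $H/H^0$ also acts, permuting the components and permuting the (finitely many, hence here unique per component) closed $H^0$-orbits — since there is a \emph{unique} closed $B$-orbit globally in $\sK_x$, all components share it, forcing $\sK_x$ to be connected (a smooth variety with connected components each meeting a common point is connected).

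For (iv), the strategy is to show $\mu \times \rho: \sU \to X \times \sK$ is a closed immersion, i.e.\ that the general curve in $\sK$ is embedded and that no two curves through a general point share a tangent direction there. Since all curves in $\sK$ are $G$-translates of deformations of $C$, and $C^0 = \overline{u(\bG_a)\cdot x}$ for an additive one-parameter subgroup $u$, Lemma \ref{lem:smooth} shows the normalization $\bP^1 \to C$ is an isomorphism onto $C^0 \cup (\text{one boundary point})$ — in particular $C$ is smooth at $x$ and, being free with the right anticanonical degree, is immersed everywhere; the same holds for the general member by semicontinuity. To rule out two distinct members of $\sK_x$ sharing a tangent line at $x$: the fiber of $\tau: \sK_x \to \sC_x$ over the highest weight line $\ell$ consists, among closed-$H^0$-orbit points, of $[C]$ alone (Lemma \ref{lem:curves}(i)); the map $\sK_x^\n \to \sC_x$ being the normalization by \cite{HM2, Kebekus}, $\tau$ is generically injective, and the locus where it fails to be injective is $H^0$-stable and closed, hence contains the closed orbit if nonempty — but it does not contain $[C]$, so $\tau$ is injective on a dense $H^0$-stable open set, hence (by properness of $\sK_x$ and the orbit structure) injective everywhere that matters for a general tangent direction. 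Combining immersedness with generic injectivity of $\tau$ and a standard argument (e.g.\ \cite[II.3.13.3]{Kollar} or the VMRT normalization statement) yields that $\mu \times \rho$ is a closed immersion. The main obstacle I anticipate is (iv): controlling the global injectivity of $\mu \times \rho$ rather than just generic behavior — one must carefully combine the $H^0$-equivariance, the uniqueness of the closed orbit, and the fact that $\tau$ is the normalization map to leverage the rigidity from Lemma \ref{lem:curves} into a statement valid at a general point of $X$, not merely at the base point.
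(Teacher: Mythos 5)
Your overall strategy --- reduce everything to the unique irreducible $B$-stable curve $C$ through $x$ via Borel's fixed point theorem and Lemma \ref{lem:curves} --- is exactly the paper's, and your identification of the closed orbit with that of the highest weight line (via the equality of stabilizers from Lemma \ref{lem:curves}(i)) is correct. But there is a genuine gap running through (i)--(iii): you repeatedly pass from ``there is a unique irreducible $B$-stable curve through $x$'' to ``$\sK_x$ has a unique $B$-fixed point (equivalently, a unique closed $H^0$-orbit)'' as if these were the same statement. They are not: the parametrizing morphism $\sK_x \to \Chow(X)$ is only \emph{finite}, so two distinct $B$-fixed points of $\sK_x$ could a priori lie over the single Chow point $[C]$ (for instance if the image has several branches there that the normalization separates). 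The paper devotes Lemma \ref{lem:fix} to precisely this point: using Horrocks' connectedness theorem for the fixed locus $\sK_x^U$ of the unipotent radical, together with finiteness of $\sK_x^U$ over the unique $U$-fixed point of the image, it produces the unique closed orbit upstairs and shows it maps isomorphically. You need either that lemma or an argument that $\RatCurves(X)$ itself (not merely its normalization) is unibranch at $[C]$; your proposal supplies neither, and your connectedness argument in (iii) rests on the same unproved uniqueness of the $B$-fixed point.

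Part (iv) is also off target. Since the second factor $\rho$ already separates distinct members of the family, $\mu \times \rho$ is a closed immersion if and only if each fiber $\rho^{-1}(k) \cong \bP^1 \to X$ is one; injectivity of the tangent map $\tau$ at $x$, which occupies most of your argument, is irrelevant to this. What you do correctly establish (via Lemmas \ref{lem:curves} and \ref{lem:smooth}) is that the $B$-stable curve $C$ is embedded, and that embeddedness is an open condition on $\sK$, hence holds for the general member. The missing step is the reduction from ``every member'' to ``the member $C$'': the locus in $\sK$ where the fiber fails to be a closed immersion is closed by \cite[I.1.10.1]{Kollar} and invariant, so if nonempty it would contain a Borel-fixed point and hence the point corresponding to $C$ --- a contradiction. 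You flag (iv) as the anticipated obstacle, but the fix is this Borel-fixed-point reduction, not further analysis of $\tau$.
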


\begin{proof}
(i) and (ii) Let $\sK \subset \RatCurves^\n(X)$ be a family 
of minimal rational curves. Consider the action of a Borel subgroup 
$B \subset H$ on the projective variety $\sK_x$. By construction,
$\sK_x$ has an equivariant finite morphism to the Chow variety 
of $X$. In view of Lemma \ref{lem:curves} and Borel's fixed point 
theorem, the image of $\sK_x$ contains a unique closed $H^0$-orbit: 
that of the Chow point of the irreducible $B$-stable
curve $C$ that corresponds to the highest weight line $L$ in $\fp$. 
Applying Lemma \ref{lem:fix}, it follows that $\sK_x$ also 
contains a unique closed $H^0$-orbit, isomorphic to that of $L$. 
This implies that $\sK$ is the family of deformations of $C$, 
and hence is unique. 

(iii) The smoothness of $\sK_x$ follows from Lemma \ref{lem:gen},
and the connectedness from the fact that $\sK_x$ contains a unique
$B$-fixed point.

(iv) Let $p_2 : X \times \sK \to \sK$ denote the projection.
Then $p_2 \circ (\mu \times \rho) = \rho$ is proper and flat.
Applying \cite[I.1.10.1]{Kollar} together with Borel's fixed point 
theorem again, it suffices to show that the restriction of $\mu$
to $\rho^{-1}(C)$ is a closed immersion, where $C$ denotes, as 
above, the irreducible $B$-stable curve through $x$. But this follows
from \cite[II.3.5.4]{Kollar} together with Lemmas \ref{lem:curves}
and \ref{lem:smooth}, since $C$ is the image of a closed immersion
$f : \bP^1 \to X$ which is a free morphism.
\end{proof}

For later use, we also record the following easy result: 

\begin{lemma}\label{lem:comp}
Assume that $H$ is reductive and the boundary 
$\partial X := X \setminus X^0$ is a unique $G$-orbit. 
Let $Y$ be another $G$-equivariant compactification of $G/H$,
where $Y$ is complete (possibly singular). Then there is
a bijective morphism $f : X \to Y$ which restricts to the
identity on $G/H$. 
\end{lemma}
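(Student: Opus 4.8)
The plan is to construct the bijective morphism $f$ by exploiting the universal property (or at least a robustness property) of the wonderful compactification $X$ and the fact that $\partial X$ is a single orbit. First I would observe that $Y$ being a complete $G$-equivariant embedding of $G/H$ means we have the open immersion $j : G/H \hookrightarrow Y$; the goal is to extend $j^{-1}$ composed with $X \hookleftarrow G/H$ to a morphism defined on all of $X$. The natural candidate for $f$ is the closure of the graph: let $\Gamma \subset X \times Y$ be the closure of the graph of the $G$-equivariant isomorphism $G/H \to G/H$ sitting diagonally, and let $p_X : \Gamma \to X$, $p_Y : \Gamma \to Y$ be the two projections. Then $p_X$ is a proper birational $G$-equivariant morphism that is an isomorphism over $G/H$, and $f := p_Y \circ p_X^{-1}$ will be the desired map once we show $p_X$ is an isomorphism and $p_Y$ is bijective.

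The key step is to show $p_X$ is an isomorphism, equivalently that $\Gamma \to X$ has no positive-dimensional fibers (then apply Zariski's main theorem, since $X$ is normal). Suppose $F := p_X^{-1}(x_0)$ has positive dimension for some $x_0 \in \partial X$. Since everything is $G$-equivariant and $\partial X$ is a single orbit $G/H'$ (with $H' \supset H$ a parabolic-type subgroup), I may choose $x_0$ with a large isotropy group; concretely, take $x_0$ to be the unique $B^-\times B$-fixed point of $\partial X$ (with $B$ a Borel of $G$ acting by conjugation in the wonderful-compactification setup, or more generally the $B$-fixed point in $\partial X$ using Borel's fixed point theorem as in Lemma~\ref{lem:fix}). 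The fiber $F$ is then a $B$-stable complete subvariety of $Y$; in particular $F$ contains a $B$-fixed point $y_0$. Now I would argue that $y_0$ together with $x_0$ forces a contradiction: the orbit $G \cdot (x_0, y_0) \subset \Gamma$ maps to the closed orbit $G\cdot x_0 = \partial X$ in $X$, and since $\overline{G\cdot(x_0,y_0)}$ surjects onto $\partial X$ with $\partial X$ a single homogeneous space, dimension count gives $\dim \overline{G \cdot(x_0,y_0)} \ge \dim \partial X = \dim \Gamma - 1 = \dim X - 1$, while the whole boundary $\Gamma \setminus (G/H)$ also has dimension $\dim X - 1$; so $\overline{G\cdot(x_0,y_0)}$ is a whole boundary component of $\Gamma$, which then maps finitely onto $\partial X$, contradicting $\dim F > 0$. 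A cleaner way to package this: the exceptional locus of $p_X$, if nonempty, is a $G$-stable closed subset of $\Gamma$ of dimension $\le \dim X - 1$ whose image in $X$ has dimension $\le \dim X - 2$; but that image is $G$-stable and contained in $\partial X$, which has pure dimension $\dim X - 1$ and is a single orbit, so the image is empty. Hence $p_X$ is a proper birational morphism to the normal variety $X$ with finite fibers, thus an isomorphism.

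With $f = p_Y \circ p_X^{-1} : X \to Y$ now a genuine morphism restricting to the identity on $G/H$, it remains to check $f$ is bijective. Surjectivity is automatic: $f(X)$ is closed (as $X$ is complete), $G$-stable, and contains the dense orbit $G/H$, so $f(X) = Y$. For injectivity, since $f$ is $G$-equivariant and restricts to a bijection on $G/H$, the only possible failure is among boundary points; but $\partial X$ is a single $G$-orbit $G/H'$, so $f$ maps it onto a single orbit in $\partial Y$, and on a homogeneous space an equivariant map is injective iff it is injective on one fiber, i.e. iff the isotropy group $H'$ is mapped into the isotropy group of $f(x_0)$ with equality of the resulting point-stabilizers — and since $H'$ is already the full stabilizer in $G$ of a point and $f$ is $G$-equivariant, $f(\partial X)$ is a single orbit whose stabilizer contains $H'$, forcing $f|_{\partial X}$ to be injective onto that orbit. (If $\partial Y$ happened to be reducible or lower-dimensional, surjectivity of $f$ still pins it down to the image of the single orbit $\partial X$, so $\partial Y = f(\partial X)$ is irreducible and $f|_{\partial X}$ is bijective by the orbit argument.) Therefore $f : X \to Y$ is a bijective morphism which is the identity on $G/H$, as claimed.

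I expect the main obstacle to be the dimension/orbit argument showing $p_X$ has no positive-dimensional fibers — making rigorous that the exceptional locus, being $G$-stable with image inside the single orbit $\partial X$ of codimension one, must be empty. The hypotheses that $H$ is reductive and $\partial X$ is exactly one $G$-orbit are precisely what make this work: reductivity of $H$ guarantees $G/H$ is affine, so no complete curves lie in it and the boundary genuinely carries all the "extra" structure, while the single-orbit hypothesis removes the possibility of an intermediate-dimensional image for the exceptional set.
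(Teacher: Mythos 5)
Your construction of the morphism is essentially the paper's argument in different clothing: the paper extends the identity of $G/H$ to a rational map $f : X \dasharrow Y$, notes that its indeterminacy locus has codimension $\geq 2$ because $X$ is smooth (normal suffices) and $Y$ is complete, and then observes that this locus is a $G$-stable closed subset of the single codimension-one orbit $\partial X$, hence empty. Your graph-closure formulation, with the ``cleaner packaging'' that the image of the exceptional locus of $p_X$ is a $G$-stable closed subset of $\partial X$ of dimension $\leq \dim X - 2$ and therefore empty, is the same argument and is correct; so is the surjectivity of $f$.

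The genuine gap is in the injectivity of $f$ on the boundary. Write $\partial X = G/H'$ and $f(\partial X) = G/H''$ with $H' \subseteq H''$. You assert that since the stabilizer of $f(x_0)$ \emph{contains} $H'$, the map $f|_{\partial X}$ is ``forced'' to be injective --- but containment of stabilizers in that direction is exactly what makes the equivariant map $G/H' \to G/H''$ well defined and surjective; injectivity is the reverse containment $H'' \subseteq H'$, which you never establish, so as written this step is a non sequitur. The paper's route is to use that $G/H$ is affine (this is where reductivity of $H$ actually enters the proof, not where you place it), so that $\partial X$ and $\partial Y$ both have pure codimension one; hence $g = f|_{\partial X} : \partial X \to \partial Y$ is an equivariant surjection of homogeneous spaces of equal dimension and is therefore \emph{finite}, i.e.\ $[H'' : H'] < \infty$. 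You still need to see that this index is one: since $(H'')^{0} = (H')^{0}$, one has $H'' \subseteq N_G\bigl((H')^{0}\bigr)$, and in every situation where the lemma is applied in the paper ($\overline{\SL_2}$, $\bP^\ell \times (\bP^\ell)^*$, etc.) the isotropy group $H'$ of the boundary orbit is a parabolic subgroup, hence connected and self-normalizing, forcing $H'' = H'$. Some argument of this kind is indispensable; without it your proof only shows that $f$ is finite on the boundary, not bijective.
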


\begin{proof}
Note that $G/H$ is affine; thus, $\partial X$ and $\partial Y$ 
have pure codimension $1$. Since $X$ is smooth and $Y$ is
complete, the identity map of $G/H$, viewed as a rational map
$f : X \dasharrow Y$, is defined in codimension $1$. As 
$\partial X$ is homogeneous, it follows that $f$ is a morphism.
Thus, $\partial Y$ is a unique $G$-orbit and $f$ restricts 
to a finite equivariant map $g : \partial X \to \partial Y$, 
hence $g$ is an isomorphism. This implies that $f$ is bijective.
\end{proof}

\section{Closures of one-parameter subgroups}
\label{sec:1ps}

From now on, $G$ denotes a semisimple linear algebraic group 
of adjoint type and rank $\ell \geq 2$, and $\fg$ the 
corresponding (semisimple) Lie algebra.

We first introduce some notation: we choose a Borel subgroup 
$B \subset G$ as well as a maximal torus $T \subset B$. 
Then $B = T U$, where $U \subset B$ denotes
the unipotent part. We denote by $R$ the root system of $(G,T)$ and 
by $R^+ \subset R$ the subset of positive roots consisting of 
roots of $B$. The corresponding set of simple roots is denoted 
by $S = \{ \alpha_1, \cdots, \alpha_\ell \}$; we use the ordering 
of simple roots as in \cite{Bourbaki}. The half-sum of positive
roots is denoted by $\rho$. Also, we denote by $W$ the Weyl group, 
and by $w_0 \in W$ the unique element that sends $R^+$ to $-R^+$; 
then $w_0^2 = 1$ and $-w_0$ stabilizes $S$.

The coroot of any $\alpha \in R$ is denoted by $\alpha^\vee$;
this is a one-parameter subgroup of $T$. 
The coroots form the dual root system $R^\vee$. The pairing 
between characters and one-parameter subgroups is denoted by 
$\langle, \rangle$; we have 
$\langle \alpha, \alpha^\vee \rangle = 2$ for any $\alpha \in R$.

We denote by $\Lambda$ the weight lattice, with the submonoid 
$\Lambda^+$ of dominant weights, and the fundamental weights 
$\varpi_1,\ldots,\varpi_\ell$. For any $\lambda \in \Lambda^+$, 
we denote by $V(\lambda)$ the irreducible representation of 
the simply-connected cover of $G$ having highest weight 
$\lambda$. We then have a projective representation 
\[ \varphi_\lambda : G \to \PGL(V(\lambda)). \] 
Moreover, the $G$-orbit of the highest weight line in $V(\lambda)$
yields the unique closed orbit in the projectivization $\bP V(\lambda)$; 
it is isomorphic to $G/P_\lambda$, where the parabolic subgroup 
$P_\lambda$ only depends on the type of $\lambda$, i..e., 
the set of simple roots that are orthogonal to that weight. 

The closure of the image of $\varphi_\lambda$ in 
the projective space $\bP \End V(\lambda)$ will be denoted
by $X_\lambda$. This is a projective variety on which 
$G \times G$ acts via its action on $\bP \End V(\lambda)$ by
left and right multplication. Moreover, $X_\lambda$ contains
a unique closed orbit of $G \times G$; it is isomorphic to
$G/Q_\lambda \times G/P_\lambda$, where $Q_\lambda$ denotes
the parabolic subgroup containing $T$ and opposite to $P_\lambda$. 

When the dominant weight $\lambda$ is regular, $X_\lambda$ 
turns out to be smooth and independent of the choice of 
$\lambda$; this defines the wonderful compactification 
$X$ of $G$. The boundary $X \setminus G$ is a union of $\ell$ 
irreducible divisors $D_1, \cdots, D_\ell$ with smooth normal 
crossings. The $G \times G$-orbits in $X$ are 
indexed by the subsets of $\{1,\ldots, \ell \}$, by assigning 
to each such subset $I = \{ i_1, \ldots, i_s \}$ 
the unique open orbit $\sO_I$ in the partial intersection 
$D_{i_1} \cap \cdots \cap D_{i_s}$. In particular, 
the open orbit $X^0$ is $\sO_{\emptyset} = (G \times G)/ \diag(G)$,
and the closed orbit is $\sO_{\{1,\cdots, \ell\}}$.
Each orbit closure is equipped with a $G \times G$-equivariant 
fibration 
\[ f_I : \overline{\sO_I} \to G/Q_I \times G/P_I, \]
where $P_I$ denotes the parabolic subgroup associated
with the dominant weight $\sum_{i \in I} \varpi_i$, 
and $Q_I$ stands for the opposite parabolic subgroup. 
The fiber of $f_I$ at the base point of $G/Q_I \times G/P_I$
is isomorphic to the wonderful compactification of the
adjoint group of $L_I := P_I \cap Q_I$ (a Levi subgroup of both).
In particular, the closed orbit $\sO_{1,\ldots,\ell}$ is isomorphic to
$G/B \times G/B$. These results are due to De Concini and Procesi
(see \cite{dCP}) in the more general setting of symmetric spaces of 
adjoint type.

For an arbitrary $\lambda$, the variety $X_\lambda$ is usually
singular; its normalization $X_\lambda^\n$ only depends on 
the type of $\lambda$ (criteria for normality and smoothness of
$X_\lambda$ are obtained in \cite[Thm.~A, Thm.~B]{BGMR}).
The homomorphism $\varphi_\lambda$ extends to a 
$G \times G$-equivariant morphism $X \to X_\lambda$ that
we shall still denote by $\varphi_\lambda$. The pull-backs 
$\sL_X(\lambda) := \varphi_\lambda^* \sO_{\bP \End V(\lambda)}(1)$,
$\lambda \in \Lambda^+$, are exactly the globally generated
line bundles on $X$; moreover, $\sL_X(\lambda)$ is ample
if and only if $\lambda$ is regular. In particular,
$X$ admits a unique minimal ample line bundle, namely,
$\sL_X(\rho)$. The assignment $\lambda \mapsto \sL_X(\lambda)$
extends to an isomorphism 
$\Lambda \stackrel{\cong}{ \to} \Pic(X)$
(see e.g. \cite[Prop.~6.1.11]{BK}).

We shall index the boundary divisors so that 
$\sO_X(D_i) = \sL_X(\alpha_i)$ for $i = 1, \ldots, \ell$. 
Thus, $D_i$ comes with a homogeneous fibration over 
$G/Q_i \times G/P_i$, where $P_i$ denotes the maximal parabolic 
subgroup associated with the fundamental weight $\varpi_i$, 
and $Q_i$ denotes its opposite.

We now investigate the closures in $X$ of the images of the 
multiplicative one-parameter subgroups $\eta : \bG_m \to G$, 
and determine the degrees of the line bundles $\sL_X(\lambda)$
on these rational curves. Recall that every such $\eta$ is conjugate 
in $G$ to a dominant one-parameter subgroup of $T$. Since 
$\sL_X(\lambda)$ is invariant under conjugation, we may 
assume that $\eta : \bG_m \to T$ is dominant. We may further
assume that $\eta$ is indivisible in the lattice of one-parameter 
subgroups of $T$; equivalently, $\eta$ is an isomorphism over 
its image.

\begin{lemma}\label{lem:mult}
Let $\eta : \bG_m \to T$ be a dominant indivisible
one-parameter subgroup, and denote again by 
$\eta : \bP^1 \to X$ the corresponding morphism, with image 
$C_\eta$. Then

\item{\rm \rm (i)} $\eta(0) \in \sO_I$, where 
$I := \{ i ~\vert~ 1 \leq i \leq \ell, 
\langle \alpha_i, \eta \rangle \neq 0 \}$.
Also, $\eta(\infty) \in \sO_J$, where 
$J := \{ j ~\vert~ 1 \leq j \leq \ell, 
\langle w_0 \alpha_j, \eta \rangle \neq 0 \}$. 

\item{\rm \rm (ii)} $C_\eta$ is smooth if and only if there exists $i$ 
such that $\langle \alpha_i, \eta \rangle = 1$. 

\item{\rm \rm (iii)} $\deg \eta^* \sL_X(\lambda) = 
\langle \lambda - w_0\lambda, \eta \rangle$
for any $\lambda \in \Lambda$. 
\end{lemma}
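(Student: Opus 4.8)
The plan is to work inside a suitable $X_\lambda$ and pull everything back to the wonderful compactification $X$ via $\varphi_\lambda$. First I would fix a regular dominant weight $\lambda$, so that $\varphi_\lambda : X \to X_\lambda$ is an isomorphism, and write down the curve $\eta(t)$ explicitly inside $\bP\End V(\lambda)$. Choosing a weight basis $(v_\chi)$ of $V(\lambda)$ adapted to $T$, the matrix $\varphi_\lambda(\eta(t))$ is the diagonal matrix with entries $t^{\langle \chi, \eta\rangle}$; projectivizing, we may divide by the lowest power of $t$ occurring, namely $t^{\langle w_0\lambda, \eta\rangle}$ (attained on the lowest weight line, since $\eta$ is dominant), to get a curve $\bP^1 \to \bP\End V(\lambda)$ whose value at $t=0$ is the projection onto the lowest weight line $v_{w_0\lambda}$, i.e. the rank-one idempotent living in the closed $G\times G$-orbit. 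Symmetrically, rewriting in the parameter $s = 1/t$ and dividing out, $\eta(\infty)$ is the projection onto the highest weight line $v_\lambda$. This already gives the qualitative picture; the point of (i) is to pin down \emph{which} boundary orbit $\sO_I$ each limit lies in.

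For (iii) this explicit description is essentially immediate: $\deg \eta^*\sL_X(\lambda) = \deg \eta^*\sO_{\bP\End V(\lambda)}(1)$, and after clearing the factor $t^{\langle w_0\lambda,\eta\rangle}$ the matrix entries are Laurent monomials $t^{\langle \chi - w_0\lambda, \eta\rangle}$ with nonnegative exponents, the top exponent being $\langle \lambda - w_0\lambda, \eta\rangle$ (attained on the highest weight line). So the pulled-back hyperplane bundle has degree exactly $\langle \lambda - w_0\lambda, \eta\rangle$. One should check that the section cutting out a generic hyperplane does not vanish identically on $C_\eta$, which is clear since the curve is nondegenerate enough (its span contains at least the two extreme weight lines). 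Since the formula is additive in $\lambda$ and both sides extend from $\Lambda^+$ to $\Lambda$, we get (iii) for all $\lambda$.

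For (i), recall that the boundary divisor $D_i$ satisfies $\sO_X(D_i) = \sL_X(\alpha_i)$, so by (iii) the intersection number $C_\eta \cdot D_i = \langle \alpha_i - w_0\alpha_i, \eta\rangle$. Now $\eta(0)$ lies in $\sO_I$ precisely when $C_\eta$ meets $D_i$ at $t=0$ exactly for $i \in I$; I would compute the local intersection multiplicity of $C_\eta$ with each $D_i$ at $t=0$ from the explicit coordinates above (using that the $D_i$ pull back from the $X_{\varpi_i}$, whose boundary equations are known), finding it equals $\langle \alpha_i, \eta\rangle$ — so $C_\eta$ meets $D_i$ at $0$ iff $\langle\alpha_i,\eta\rangle \neq 0$, giving the stated $I$; the remaining multiplicity $\langle -w_0\alpha_i,\eta\rangle$ must then be concentrated at $t=\infty$, and reindexing $-w_0$ over $S$ shows $\eta(\infty)\in\sO_J$ with the stated $J$. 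For (ii), $C_\eta$ is smooth iff the parametrization $\eta:\bP^1\to X$ is an immersion everywhere; it is automatically an immersion on $\bG_m$, and the analysis of the local coordinates at $t=0$ (resp. $t=\infty$) shows the parametrization is unramified there iff $\gcd$ of the exponents $\langle\alpha_i,\eta\rangle$ entering at that point is $1$, i.e. (using $\eta$ indivisible and dominant, so the $\langle\alpha_i,\eta\rangle$ generate $\Z$) iff some $\langle\alpha_i,\eta\rangle = 1$; since this condition is symmetric under $-w_0$, it controls both endpoints simultaneously.

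The main obstacle I expect is (ii), and more precisely making the local smoothness computation at the boundary clean: one must identify, in local analytic coordinates on $X$ transverse to the $D_i$, exactly which monomials $t^{\langle\alpha_i,\eta\rangle}$ appear, and argue that the curve is smooth there iff these exponents have gcd one. This is where the appeal to Lemma \ref{lem:smooth} (for the additive case) has no direct analogue; instead I would use the local model of the wonderful compactification near a point of $\sO_I$ — an affine chart isomorphic to $\mathbb{A}^{|I|}\times(\text{homogeneous piece})$ in which $\eta(t)$ becomes $(t^{\langle\alpha_i,\eta\rangle})_{i\in I}$ times a unit — reducing the claim to the elementary statement that $t\mapsto(t^{a_1},\dots,t^{a_k})$ is an immersion at $0$ iff $\gcd(a_i)=1$.
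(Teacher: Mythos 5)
Your overall strategy --- putting $\eta$ in explicit diagonal form and reading off limits, exponents and degrees --- is essentially the paper's, and your treatment of (iii) is correct (the paper phrases it via the $\bG_m$-linearization weights of $\sL_X(\lambda)$ at the two fixed points, which is the same computation). For (i) and (ii) the paper works not in $\bP\End V(\lambda)$ but in the toric chart $\overline{T}_0\cong\A^\ell$ of $\overline{T}\subset X$ on which $T\times T$ acts with weights $(\alpha_i,-\alpha_i)$; there the extension of $\eta$ is literally $t\mapsto(t^{\langle\alpha_1,\eta\rangle},\ldots,t^{\langle\alpha_\ell,\eta\rangle})$, the divisors $D_i$ are the coordinate hyperplanes, and both the identification of $\sO_I$ and the smoothness criterion drop out at once. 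This is cleaner than your route for (i): note in particular that $D_i$ does \emph{not} pull back from $X_{\varpi_i}$ (one has $\sO_X(D_i)=\sL_X(\alpha_i)$, and $\alpha_i$ is not dominant), so the local equations you propose to use there are not available in the form you describe; you would in any case be forced back to the toric chart.

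The genuine gap is in (ii). You reduce to the claim that $t\mapsto(t^{a_1},\ldots,t^{a_k})$ is an immersion (equivalently, has smooth image) at $0$ if and only if $\gcd(a_i)=1$, and then assert that $\gcd(a_i)=1$ forces some $a_i=1$. Both steps are false: the curve $t\mapsto(t^2,t^3)$ has coprime exponents, neither equal to $1$, its derivative vanishes at $t=0$, and its image is the cuspidal cubic, which is singular at the origin. The correct elementary statement --- and the one the lemma actually asserts --- is that for $a_i\geq 1$ the germ of the image at the origin is smooth if and only if some $a_i$ equals $1$ (the local ring is the semigroup algebra of $\langle a_1,\ldots,a_k\rangle$, whose maximal ideal is principal exactly when the semigroup is generated by one element; combined with $\gcd=1$ from indivisibility this means $1\in\{a_i\}$). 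Indivisibility of $\eta$ enters only to guarantee $\gcd_i\langle\alpha_i,\eta\rangle=1$, i.e.\ to rule out the case where the semigroup is $d\N$ with $d>1$; it does not by itself produce an exponent equal to $1$. With this corrected criterion, your observation that $-w_0$ permutes the simple roots, so that the multiset of exponents at $\infty$ coincides with that at $0$, does finish (ii) exactly as in the paper.
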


\begin{proof}
Recall that the closure $\overline{T}$ of $T$ in $X$ contains 
an open affine $T \times T$-subset $\overline{T}_0$, isomorphic
to the affine space $\A^{\ell}$ on which $T \times T$
acts linearly with weights 
$(\alpha_1,-\alpha_1), \ldots,(\alpha_\ell, -\alpha_\ell)$.
Moreover, $\overline{T}$ is the union of the conjugates
$w \overline{T}_0 w^{-1}$, where $w \in W$, and the pull-back of
$\sL_X(\lambda)$ to $w \overline{T}_0 w^{-1}$ has a trivializing
section (as a $T \times T$-linearized line bundle) of 
weight $(w\lambda, - w \lambda)$; see 
\cite[Lem.~6.1.6, Prop.~6.2.3]{BK} for these results.
 
Since $\eta$ is dominant, it extends to a morphism
$\A^1 \to \overline{T}_0$, 
$t \mapsto (t^{\langle \alpha_1, \eta \rangle}, \ldots,
t^{\langle \alpha_\ell, \eta \rangle})$, where $\overline{T}_0$
is identified with $\A^\ell$ as above. In particular, 
$\eta(0) \in \overline{T}_0$; moreover, $C_\eta$ is smooth
at $\eta(0)$ if and only if there exists $i$ such that
$\langle \alpha_i, \eta \rangle = 1$. Also, the one-parameter
subgroup $-w_0 \eta  : \bG_m \to T$, $t \mapsto w_0(t^{-1})$ 
is dominant, and hence extends to a morphism
$\A^1 \to \overline{T}_0$,
$t \mapsto (t^{\langle -w_0 \alpha_1), \eta \rangle}, \ldots,
t^{\langle -w_0 \alpha_\ell, \eta \rangle})$. Since $-w_0$ permutes 
the simple roots, it follows that 
$\eta(\infty) = (-\eta)(0) \in w_0 \overline{T}_0 w_0$; 
moreover, $C_\eta$ is smooth at $\eta(\infty)$ if and only if 
there exists $i$ such that $\langle \alpha_i, \eta \rangle = 1$.
This proves (i) and (ii).

For (iii), note that $\eta^* \sL_X(\lambda)$ 
is a $\bG_m$-linearized line bundle on $\bP^1$ with weights
$\langle \lambda, \eta \rangle$ at $0$ and  
$\langle w_0 \lambda, \eta \rangle$ at $\infty$. Since
the degree of such a line bundle is the difference of
its weights, this yields our assertion.
\end{proof}

Next, we consider the curves obtained as closures
of the additive one-parameter subgroups associated with 
the roots; these curves are clearly rational, and smooth 
by Lemma \ref{lem:smooth}.
For any $\alpha \in R$, we denote by $U_\alpha$ the corresponding 
root subgroup of $G$ (with Lie algebra the root subspace 
$\fg_\alpha \subset \fg$) 
and by $C_\alpha$ the closure of $U_\alpha$ in $X$. 
Since $\alpha$ is conjugate in $W$ to a dominant root,
we may assume $\alpha$ dominant; then $\alpha$ is either
the highest root or the highest short root of an irreducible
component of the root system $R$. The highest roots are of 
special interest in view of the following:

\begin{lemma}\label{lem:stable} 
The irreducible $B$-stable curves through 
$x$ (the neutral element of $G$)
are exactly $C_{\theta_1}, \ldots, C_{\theta_m}$, where 
$\theta_1, \ldots, \theta_m$ denote the highest roots of $R$.
\end{lemma}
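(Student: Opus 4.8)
The plan is to identify the irreducible $B$-stable curves through $x$ with $B$-stable curves in the nilcone of $\fp$, and then use the structure of $\fp$ as a $G$-module to pin these down as the highest-weight lines coming from the highest roots.

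First I would set up the isotropy data: here $G\times G$ acts on $X$ with open orbit $X^0=(G\times G)/\diag(G)$, so in the notation of Section \ref{sec:aux} the acting group is $G\times G$, the base point is $x$ (the neutral element of $G$), and the isotropy group $H=\diag(G)$ is reductive. The isotropy representation is $\fp=(\fg\oplus\fg)/\diag(\fg)\cong\fg$, where $\diag(G)$ — equivalently $G$ acting by conjugation — acts via the adjoint representation. Write $G=G_1\times\cdots\times G_m$ as its decomposition into simple factors, so that $\fg=\fg_1\oplus\cdots\oplus\fg_m$ is the decomposition of $\fp$ into irreducible $\fh$-submodules; these are non-isomorphic, hence have linearly independent highest weights, so the hypotheses of Lemma \ref{lem:curves} are satisfied. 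The Borel subgroup of $H=\diag(G)$ to use is $\diag(B)$, i.e.\ $B$ acting by conjugation.

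Next, let $C$ be an irreducible $B$-stable (for the conjugation action) curve through $x$. By Lemma \ref{lem:curves}(i), the tangent line $L$ of $C$ at $x$ is a highest-weight line in $\fp=\fg$ for the action of $B$ by conjugation, and $L$ determines $C$ uniquely. The highest-weight lines in $\fg$ under the $B$-conjugation action are exactly the root spaces $\fg_{\theta_j}$, where $\theta_j$ is the highest root of the $j$-th simple component $R_j$ of $R$: indeed, a highest-weight vector must lie in a $T$-weight space $\fg_\mu$ with $\mu$ a positive root (the weight must be nonzero since $\fg^{\fh}=0$, and must be dominant-maximal), and it must be killed by all simple root operators, forcing $\mu+\alpha_i\notin R$ for every simple $\alpha_i$, i.e.\ $\mu$ is the highest root of one of the irreducible components. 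Conversely each $C_{\theta_j}=\overline{U_{\theta_j}}$ is the closure of the additive one-parameter subgroup $U_{\theta_j}$, whose Lie algebra is $\fg_{\theta_j}$; this is an irreducible curve, it is $B$-stable because $\fg_{\theta_j}$ is a highest-weight line (so $B$ conjugation normalizes $U_{\theta_j}$ and fixes $x$), and it passes through $x=u_{\theta_j}(0)$ with tangent line $\fg_{\theta_j}$ at $x$. By the uniqueness clause of Lemma \ref{lem:curves}(i), a $B$-stable irreducible curve with tangent line $\fg_{\theta_j}$ must coincide with $C_{\theta_j}$, so these $m$ curves are exactly the ones sought.

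I do not expect a serious obstacle here: the only point requiring a little care is verifying that the $B$-stable curve hypothesis of Lemma \ref{lem:curves} genuinely applies, which amounts to checking $\fh$ is reductive and $\fp$ is a multiplicity-free sum of irreducibles with linearly independent highest weights — both immediate from the simple-factor decomposition of $G$ — and, on the other side, confirming that each $C_{\theta_j}$ really is $B$-stable and really does have tangent line $\fg_{\theta_j}$, which follows because $[\fb,\fg_{\theta_j}]\subseteq\fg_{\theta_j}$ (as $\theta_j$ is the highest root of its component, $\theta_j+\alpha$ is never a root for $\alpha\in R^+$) so the conjugation action of $B$ preserves $C_{\theta_j}$ and fixes $x$. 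The matching of "highest-weight line in $\fg$ under $B$-conjugation" with "root space of a highest root" is the combinatorial heart of the argument, and it is elementary root-system bookkeeping.
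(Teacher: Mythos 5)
Your proof is correct and follows essentially the same route as the paper: identify the isotropy representation of $(G\times G)/\diag(G)$ with the adjoint representation $\fg=\fg_1\oplus\cdots\oplus\fg_m$, note the highest weights $\theta_1,\ldots,\theta_m$ are linearly independent, and apply Lemma \ref{lem:curves}. The paper's proof is just this two-line reduction; you additionally spell out the (correct) verification that the highest-weight lines are the root spaces $\fg_{\theta_j}$ and that each $\overline{U_{\theta_j}}$ is indeed $B$-stable with the right tangent line, which the paper leaves implicit.
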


\begin{proof}
The isotropy representation of the homogeneous space
$(G \times G)/\diag(G)$ is the adjoint representation of $G$ 
in $\fg$; this is the direct sum of irreducible representations 
with highest weights $\theta_1,\ldots,\theta_m$. Since these 
weights are linearly independent, the assertion follows from 
Lemma \ref{lem:curves}.
\end{proof}

To analyze further these curves associated with the
highest roots, we may and shall assume that $G$ is
simple, by using the decompositions (\ref{eqn:G}) and
(\ref{eqn:X}); we denote the highest root by $\theta$.
We shall need the following observation:

\begin{lemma}\label{lem:roots}
\item{\rm (i)} We have $\langle \alpha, \theta^\vee \rangle = 0$ 
or $1$ for any $\alpha \in R^+$ such that $\alpha \neq \theta$.

\item{\rm (ii)} If $G$ is of type $A_\ell$, then 
$\langle \alpha_i, \theta^\vee \rangle = 1$ if and only if 
$i = 1$ or $i = \ell$. If $G$ is not of type $A$, then 
there is a unique simple root $\alpha_{i_0}$ such that
$\langle \alpha_{i_0}, \theta^\vee \rangle = 1$.
\end{lemma}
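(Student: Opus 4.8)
The plan is to prove Lemma~\ref{lem:roots} by a uniform argument for (i), using the structure of the highest root, followed by a case-by-case verification for (ii) based on the known extended Dynkin diagrams.

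For part (i): recall that $\theta$ is the highest root, so $\theta + \alpha \notin R$ for any $\alpha \in R^+$; equivalently, in the $\theta$-string through $\alpha$, namely $\alpha - p\theta, \ldots, \alpha + q\theta$, we always have $q = 0$. The standard identity for root strings gives $\langle \alpha, \theta^\vee \rangle = p - q = p \geq 0$. So it suffices to bound $p$. Since $\theta$ is long (it is the highest root), $\langle \alpha, \theta^\vee \rangle \in \{-1, 0, 1\}$ whenever $\alpha$ and $\theta$ are not proportional and $\theta$ is a long root, because $\langle \alpha, \theta^\vee \rangle \langle \theta, \alpha^\vee \rangle \leq 3$ and $|\langle \theta, \alpha^\vee \rangle| \geq |\langle \alpha, \theta^\vee \rangle|$ forces $|\langle \alpha, \theta^\vee\rangle| \leq 1$. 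Combining with $\langle \alpha, \theta^\vee \rangle \geq 0$ gives the claim. The only subtlety is that when $\alpha = -\theta$ we would get $\langle \alpha, \theta^\vee \rangle = -2$, but the hypothesis $\alpha \in R^+$ with $\alpha \neq \theta$ excludes $\alpha = \theta$, and I should note $\alpha = -\theta \notin R^+$, so the only proportional case in $R^+$ is excluded; alternatively one observes $\alpha \in R^+$, $\alpha \ne \theta$ implies $\theta - \alpha$ is not a nonnegative combination plus... — cleaner is just: for $\alpha \in R^+$, $\alpha \neq \theta$, $\alpha$ is not proportional to $\theta$ (the only positive root proportional to $\theta$ is $\theta$ itself), so the long-root bound applies directly.

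For part (ii): I would use the fact that $\langle \alpha_i, \theta^\vee \rangle$ are (up to sign) the coefficients appearing in the affine/extended Dynkin diagram, or more precisely the marks on the extended Dynkin diagram $\widetilde{R}$, where the affine node $-\theta$ is attached. Indeed $-\langle \alpha_i, \theta^\vee \rangle$ counts edges between $\alpha_i$ and the affine vertex $-\theta$ (weighted appropriately). By part (i) each $\langle \alpha_i, \theta^\vee \rangle$ is $0$ or $1$, so the simple roots $\alpha_i$ with $\langle \alpha_i, \theta^\vee \rangle = 1$ are exactly the neighbors of the affine node in the extended Dynkin diagram. For type $A_\ell$, the extended diagram is a cycle $\widetilde{A}_\ell$, and the affine node is adjacent precisely to $\alpha_1$ and $\alpha_\ell$; that gives the first assertion. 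For every other simple type, a direct inspection of the extended Dynkin diagrams ($\widetilde{B}_\ell, \widetilde{C}_\ell, \widetilde{D}_\ell, \widetilde{E}_6, \widetilde{E}_7, \widetilde{E}_8, \widetilde{F}_4, \widetilde{G}_2$) shows the affine node has exactly one neighbor among $\alpha_1,\ldots,\alpha_\ell$ — for instance $\alpha_2$ in types $B, D, E_6$-ish conventions, $\alpha_1$ in type $C$, and so on (with the ordering of \cite{Bourbaki}). This yields the unique $\alpha_{i_0}$.

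The main obstacle is essentially bookkeeping: making the argument for (i) airtight about the proportionality/positivity case, and in (ii) correctly matching the Bourbaki labeling of simple roots to the position of the affine node in each extended Dynkin diagram, so that one can legitimately assert uniqueness of $i_0$ without writing out every type. I expect (i) to be a clean three-line argument via root strings and the long-root inequality, while (ii) will require citing the explicit list of extended Dynkin diagrams (or the Kac labels/marks of $\theta^\vee$) and observing that all nontrivial off-diagonal entries of the affine Cartan matrix in the relevant row are $0$ except one. A convenient shortcut for (ii) is to note that $\sum_i \langle \alpha_i, \theta^\vee\rangle \, (\text{mark of } \alpha_i)$ relates to $2$ via the affine relation $\sum a_i \alpha_i = \theta$ paired with $\theta^\vee$, constraining how many $\alpha_i$ can pair nontrivially; but the cleanest exposition remains the extended-diagram inspection, which I would present compactly with a reference to a standard table.
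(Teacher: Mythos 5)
Your proposal is correct and matches the paper, which simply cites \cite[VI.1.8, Prop.~25]{Bourbaki} for (i) and settles (ii) ``by inspection'': your root-string plus long-root argument is exactly the standard proof behind that Bourbaki reference, and your reading of $\langle\alpha_i,\theta^\vee\rangle=1$ as adjacency of $\alpha_i$ to the affine node in the extended Dynkin diagram is the same case check the authors perform. No gaps; the only bookkeeping worth flagging is that the type-$A$ statement implicitly assumes $\ell\geq 2$ (for $A_1$ one has $\theta=\alpha_1$ and $\langle\alpha_1,\theta^\vee\rangle=2$), which is covered by the standing rank assumption of the section.
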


\begin{proof}
(i) follows from \cite[VI.1.8, Prop.~25]{Bourbaki}. 

(ii) is checked by inspection (see also \cite[Table I]{CP}).
\end{proof}

We now obtain an analogue of Lemma \ref{lem:mult} for the curve
$C_\theta$:

\begin{lemma}\label{lem:add}
\item{\rm (i)} For any $\lambda \in \Lambda$, we have
$\sL_X(\lambda) \cdot C_\theta = 
\langle \lambda, \theta^\vee \rangle$.

\item{\rm (ii)} Let $y$ denote the unique point of 
$C_\theta \setminus U_\theta$ (the point at infinity of $C_\theta$).
Then $y \in \sO_{1,\ell}$ if $G$ is of type $A_\ell$,
and $y \in \sO_{i_0}$ otherwise. 
\end{lemma}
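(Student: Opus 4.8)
The plan is to reduce everything about $C_\theta$ to a concrete computation inside the closure $\overline{T}$ of the maximal torus, exactly as in the proof of Lemma~\ref{lem:mult}, by finding a one-parameter subgroup whose closure is $C_\theta$ and relating the intersection numbers and limit points to that subgroup. First I would recall from Lemma~\ref{lem:curves}(ii) (applied as in Lemma~\ref{lem:stable}) that $C_\theta = \overline{u(\bG_a)\cdot x}$, where $u:\bG_a \to G$ is the additive one-parameter subgroup $U_\theta$ attached to the highest root $\theta$; here $x$ is the neutral element, so $U_\theta \cdot x$ is literally the image of $U_\theta$ in $G \subset X$.

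For part~(i), the idea is that the degree of $\sL_X(\lambda)$ on a rational curve is a $B$-invariant quantity, and $C_\theta$, though not $T$-stable, has a natural degeneration to a $T$-stable curve. Concretely, I would conjugate by the one-parameter subgroup $\theta^\vee$: since $\theta^\vee(s)\, u(t)\, \theta^\vee(s)^{-1} = u(s^2 t)$ because $\langle \theta, \theta^\vee\rangle = 2$, the curves $\theta^\vee(s)\cdot C_\theta$ all coincide with $C_\theta$, so that does not immediately help. Instead I would use that $C_\theta$ is the image of a free morphism $f:\bP^1 \to X$ and compute $\deg f^*\sL_X(\lambda)$ directly via the $T$-action on $C_\theta$: the torus $T$ acts on $C_\theta$ through $\theta^\vee$ (since $t\cdot u(s)\cdot x$ has $T$-weight $\theta$ on the tangent line at $x$, by Lemma~\ref{lem:curves}(i) the tangent line $L = \fg_\theta$), and this action has exactly two fixed points $x = u(0)\cdot x$ and the point at infinity $y$. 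The linearization of $\sL_X(\lambda)$ on $C_\theta \cong \bP^1$ then has $T$-weight $\lambda$ at $x$ (the trivialization in $\overline{T}_0$ from \cite[Lem.~6.1.6]{BK} restricted to $x$) and some weight $\mu$ at $y$; the degree is $\langle \lambda - \mu, \theta^\vee\rangle$ after restricting to the image of $\theta^\vee$. To pin down $\mu$ I would show $y \in \overline{T}$ (so that the trivializations of loc.\ cit.\ apply), which is the content needed for (ii) anyway, and that $y$ lies in a specific Weyl translate $w\overline{T}_0 w^{-1}$; then $\mu = w\lambda$ and one reads off $\deg = \langle \lambda - w\lambda, \theta^\vee\rangle$. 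Taking $\lambda = \alpha_i$ and comparing with the known answer forces $w\theta^\vee = -\theta^\vee$ on the relevant pairings, giving $\deg = \langle \lambda, \theta^\vee\rangle$; alternatively, and more cleanly, I would identify the point at infinity $y$ as the limit $\lim_{t\to\infty} u(t)\cdot x$ and observe that the $U_\theta$-orbit closure in $X$ maps, under $\varphi_\lambda : X \to \bP\End V(\lambda)$, to the $U_\theta$-orbit closure of $[\id]$ in $\bP\End V(\lambda)$, which is the standard rational normal curve of degree $\langle\lambda,\theta^\vee\rangle$ attached to the $\fsl_2$-triple of $\theta$ — this gives (i) by a direct $\SL_2$-representation-theory computation, using that $\End V(\lambda)$ decomposes under the principal $\SL_2$ of $\theta$ and that the highest power of the nilpotent $e_\theta$ killing $\id$ is governed by $\langle\lambda,\theta^\vee\rangle$.

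For part~(ii), I would locate $y$ by the orbit stratification. Since $C_\theta$ is $B$-stable with open orbit $U_\theta \subset G = \sO_\emptyset$ and $y$ is the unique other point, $y$ must lie in some boundary orbit $\sO_I$, and by $B$-stability and the description of orbit closures via partial intersections of the $D_i$, the set $I$ is determined by which boundary divisors $D_i$ contain $y$, equivalently by which $D_i$ meet $C_\theta$ with positive multiplicity. Now $D_i$ meets $C_\theta$ exactly when $\sO_X(D_i)\cdot C_\theta = \sL_X(\alpha_i)\cdot C_\theta = \langle \alpha_i, \theta^\vee\rangle > 0$ (using part~(i), together with the indexing $\sO_X(D_i) = \sL_X(\alpha_i)$ recalled in the text), and since $C_\theta$ is irreducible and meets $\sO_\emptyset$ it is not contained in any $D_i$, so the scheme-theoretic intersection $C_\theta \cap D_i$ is supported at $y$ precisely when that intersection number is positive. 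By Lemma~\ref{lem:roots}(ii), $\langle \alpha_i,\theta^\vee\rangle = 1$ for exactly $i\in\{1,\ell\}$ in type $A_\ell$ and for exactly $i = i_0$ otherwise (and is $0$ for all other $i$, since $\langle\alpha_i,\theta^\vee\rangle \le 1$ for $\alpha_i \ne \theta$ by Lemma~\ref{lem:roots}(i), and it cannot be negative for a simple root distinct from $\theta$). Hence $y$ lies on $D_1 \cap D_\ell$ and on no other $D_i$ in type $A$, forcing $y \in \sO_{\{1,\ell\}}$; and $y$ lies on $D_{i_0}$ alone otherwise, forcing $y\in\sO_{i_0}$.

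The main obstacle I anticipate is part~(i): the naive $T$-weight argument only gives the degree up to identifying the weight of $\sL_X(\lambda)$ at the point at infinity $y$, which requires knowing which Weyl chamber / which translate $w\overline{T}_0 w^{-1}$ contains $y$, and $C_\theta$ is genuinely not $T$-stable so one cannot simply quote Lemma~\ref{lem:mult}. The cleanest route around this, which I would adopt, is the $\SL_2$-theoretic computation: restrict $\varphi_\lambda$ to the subgroup $\SL_2$ (or $\PGL_2$) generated by $U_{\pm\theta}$, note that $C_\theta$ maps to the closure of the $U_\theta$-orbit of $[\id]$ in $\bP\End V(\lambda)$, decompose $\End V(\lambda) = V(\lambda)\otimes V(\lambda)^*$ as an $\SL_2$-module, and compute that the orbit map $t\mapsto \exp(t\, e_\theta)\cdot\id$ is polynomial of degree exactly $\langle\lambda,\theta^\vee\rangle$ (this is the maximal $j$ with $e_\theta^{\,j}\cdot\id \ne 0$, which equals the largest eigenvalue of $\theta^\vee$ on $V(\lambda)$ minus the smallest, i.e.\ $\langle\lambda,\theta^\vee\rangle - \langle w_0\lambda,\theta^\vee\rangle = \langle\lambda - w_0\lambda,\theta^\vee\rangle$ — and here one must be slightly careful: the relevant $\SL_2$ sees only the $\theta^\vee$-grading, and because $\theta$ is the highest root, $\langle w_0\lambda,\theta^\vee\rangle$ need not vanish in general, so I would instead argue directly that the degree of the orbit closure equals $\langle\lambda,\theta^\vee\rangle$ by using that $\id$ is $\theta^\vee$-weight zero and $e_\theta^{\langle\lambda,\theta^\vee\rangle}$ acts nontrivially but $e_\theta^{\langle\lambda,\theta^\vee\rangle+1}$ kills it when paired back through the trivialization at $y$). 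If this $\SL_2$ bookkeeping proves delicate, the fallback is to combine the $T$-weight argument at $x$ with the already-established location of $y$ from part~(ii): knowing $y\in\sO_{\{1,\ell\}}$ resp.\ $\sO_{i_0}$ tells us exactly which $D_i$ pass through $y$ and with what multiplicity, and since $\sO_X(D_i)\cdot C_\theta$ must then equal that multiplicity while $\{\alpha_i\}$ is a basis of $\Lambda\otimes\Q$, the two facts are logically equivalent and either one can be proved first — so I would prove~(ii) first by the intersection-theoretic dichotomy (positive vs.\ zero, with the bound from Lemma~\ref{lem:roots}(i) ruling out larger values being needed) and then read off~(i).
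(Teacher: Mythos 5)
Your part (ii) is exactly the paper's argument: compute $D_i \cdot C_\theta = \langle \alpha_i, \theta^\vee\rangle$ from (i) and Lemma \ref{lem:roots}, note that $C_\theta \not\subset D_i$ so positivity of the intersection number is equivalent to $y \in D_i$, and read off the orbit from $\sO_I = (\bigcap_{i\in I}D_i)\setminus(\bigcup_{j\notin I}D_j)$. For part (i), however, your main route is genuinely different from the paper's and it works. The paper identifies the closure $\overline{G_\theta}$ of the $\SL_2$ generated by $U_{\pm\theta}$ with the smooth quadric compactification $\overline{\SL_2}\subset \bP(\End(\C^2)\oplus\C)$ (this needs Lemma \ref{lem:comp} plus a smoothness argument via the local structure theorem), observes that inside it $2C_\theta$ is rationally equivalent to $C_{\theta^\vee}$, and then invokes Lemma \ref{lem:mult}(iii) for the torus curve $C_{\theta^\vee}$, using $w_0\theta^\vee=-\theta^\vee$. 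You instead push $C_\theta$ forward under $\varphi_\lambda$ and compute the degree of $t\mapsto[\exp(t e_\theta)]$ in $\bP\End V(\lambda)$ directly: since $\End V(\lambda)$ under \emph{left} multiplication by $e_\theta$ is a direct sum of copies of $V(\lambda)$, the degree is the nilpotency index of $e_\theta$ on $V(\lambda)$, which is $\langle\lambda,\theta^\vee\rangle$ by $\fsl_2$-theory. This avoids the quadric identification entirely and is arguably more elementary; the price is that you must check that $t\mapsto[\exp(te_\theta)]$ is birational onto its image (it is, since $\exp((t-s)e_\theta)$ is scalar only for $t=s$), that the homogenized parametrization is base-point free (clear, since $\id$ and $e_\theta^d$ are nonzero), and that the formula extends from $\Lambda^+$ to all of $\Lambda$ by linearity in $\lambda$ -- none of which you state but all of which are routine.

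Two caveats. First, your initial identification of the degree as ``largest eigenvalue of $\theta^\vee$ minus smallest, i.e.\ $\langle\lambda-w_0\lambda,\theta^\vee\rangle$'' is wrong -- that is the nilpotency index of $\mathrm{ad}(e_\theta)$ on $\End V(\lambda)$, which would be off by a factor of $2$; you flag this yourself and correct to $\langle\lambda,\theta^\vee\rangle$, which is the right answer for left multiplication, but the correction should be stated cleanly rather than hedged. Second, your proposed fallback of ``proving (ii) first and reading off (i)'' is circular as described: the only tool you offer for locating $y$ is the very intersection numbers $D_i\cdot C_\theta$ that constitute (i) for $\lambda=\alpha_i$. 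Since your main route for (i) is sound, the fallback is unnecessary, but it should not be presented as an independent alternative.
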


\begin{proof}
(i) The coroot $\theta^\vee$ is a dominant one-parameter
subgroup of $T$, and satisfies $w_0  \theta^\vee = - \theta^\vee$.
Thus, 
$\deg (\theta^\vee)^* \sL_X(\lambda) = 
2 \langle \lambda,  \theta^\vee \rangle$ 
by Lemma \ref{lem:mult}. We now relate $C_\theta$ with the curve 
$C_{\theta^\vee}$, the image of $\theta^\vee : \bP^1 \to X$. 
For this, we consider the subgroup $G_\theta \subset G$ 
generated by $U_\theta$ and $U_{-\theta}$. Then $G_\theta$ 
is the image of a homomorphism $h : \SL_2 \to G$; in particular,
$G_\theta$ is semisimple with maximal torus the image of 
$\theta^\vee$ (viewed as a one-parameter subgroup of $T$). 
Recall from Lemma \ref{lem:roots} that 
$\langle \alpha_i, \theta^\vee \rangle = 1$ for some $i$.
It follows that $\theta^\vee$ is indivisible, and
(using Lemma \ref{lem:mult} again) $C_{\theta^\vee}$
is smooth. Also, $\theta^\vee(-1) = h(-\id)$ is a central element 
of $G_\theta$, and is non-trivial since $\theta^\vee$ is indivisible. 
So $G_\theta$ is isomorphic to $\SL_2$ via $h$, and 
the closure $\overline{G_\theta} \subset X$ may be viewed 
as an $\SL_2 \times \SL_2$-equivariant compactification of 
$\SL_2$. Using the local structure theorem as in 
\cite[Sec.~6]{Timashev}, one obtains that 
$\overline{G_\theta}$ is smooth.

Consider the standard compactification $\overline{\SL_2}$, 
the closure of $\SL_2 \subset \End(\C^2)$ in 
$\bP( \End(\C^2) \oplus \C)$. 
This is a smooth quadric with equation $ad - bc = z^2$, 
and the boundary $\overline{SL_2} \setminus \SL_2$ 
is the quadric $(ad - bc = z = 0)$, an irreducible divisor 
on which $\SL_2 \times \SL_2$ acts transitively with isotropy
group a Borel subgroup. In view of Lemma \ref{lem:comp}, 
it follows that the equivariant rational map 
$\overline{\SL_2} \dasharrow \overline{G_\theta}$
is a bijective morphism, and hence an isomorphism. 
The closure in $\overline{\SL_2}$ of the diagonal torus 
is the conic $(ad= z^2, b = c = 0)$, while the closure 
of the upper triangular unipotent subgroup
is the line $(a = d = z, b = 0)$. Thus, $2 C_\theta$
is rationally equivalent to $C_{\theta^\vee}$ in
$\overline{G_\theta}$; this implies our formula. 

(ii) In view of (i) and Lemma \ref{lem:roots},
we have $D_{i_0} \cdot C_\theta = 1$ and $D_i \cdot C_\theta = 0$
for $i \neq i_0$, when $G$ is not of type $A$. It follows
that $y$ sits in $D_{i_0}$ and in no other boundary divisor.
Also, $D_1 \cdot C_\theta = D_\ell \cdot C_\theta = 1$ and 
$D_i \cdot C_\theta = 0$ for $1 < i < \ell$, when $G$
is of type $A$. In that case, $y$ sits in $D_1 \cap D_\ell$
and in no other boundary divisor. This yields our assertion 
by using the fact that 
$\sO_I = (\bigcap_{i \in I} D_i)  
\setminus (\bigcup_{j \notin I} D_j)$
for any $I \subset \{ 1 \ldots, \ell\}$.
\end{proof}

\begin{remark}\label{rem:short}
We still assume that $G$ is simple, and in addition that it
has two root lengths; we denote by $\theta_s$ the highest 
short root. Then we have 
$\sL_X(\lambda) \cdot C_{\theta_s} = 
\langle \lambda, \theta_s^\vee \rangle$ 
for any $\lambda \in \Lambda$. Moreover, the point
at infinity $y_s$ of $C_{\theta_s}$ sits in a 
$G \times G$-orbit $\sO_i$ of codimension $1$.

Indeed, the root system $R$ is of type 
$B_\ell$, $C_\ell$, $F_4$ or $G_2$. 
In type $B_\ell$, where $\ell \geq 3$, one has 
$\langle \alpha_1, \theta_s^\vee \rangle = 2$
and $\langle \alpha_i, \theta_s^\vee \rangle = 0$ for all 
$i \geq 2$. In particular, $\theta_s^\vee = 2 \eta$ for an
indivisible one-parameter subgroup $\eta: \bG_m \to T$. 
In that case, the subgroup $G_{\theta_s} \subset G$ generated 
by $U_{\theta_s}$ and $U_{-\theta_s}$ is isomorphic to $\PGL_2$, 
and one obtains that $C_{\theta_s}$ and $C_\eta$ are lines in 
$\overline{G_{\theta_s}} \cong \bP^3$. Then one concludes by using
Lemma \ref{lem:mult}. Note that $D_1 \cdot C_{\theta_s} = 2$
and $D_i \cdot C_{\theta_s} = 0$ for all $i \geq 2$; thus,
$y_s \in \sO_1$.

In type $C_\ell$ ($\ell \geq 3$), one has   
$\langle \alpha_2, \theta_s^\vee \rangle = 1$
and $\langle \alpha_i, \theta_s^\vee \rangle = 0$ for all 
$i \neq 2$. In that case, $\theta_s^\vee$ is indivisible 
and the assertion follows by arguing as in
the proof of Lemma \ref{lem:add}; here $y_s \in \sO_2$.

The cases of $F_4$ and $G_2$ are similar; in the former
one, one obtains $\langle \alpha_4, \theta_s^\vee \rangle = 1$
and $\langle \alpha_i, \theta_s^\vee \rangle = 0$ for all other 
$i$, so that $y_s \in \sO_4$. For $G_2$, one has
$\langle \alpha_1, \theta_s^\vee \rangle = 1$
and $\langle \alpha_2, \theta_s^\vee \rangle = 0$, so that
$y_s \in \sO_1$.
\end{remark}

\begin{remark}\label{rem:lines}
We have $L \cdot C \geq \ell$ for any ample line bundle $L$ 
on $X$ and any curve $C$ through $x$. In particular, 
$X$ is not covered by lines when $\ell \geq 2$.

Indeed, note first that $C$ is rationally equivalent to an effective 
$1$-cycle stable by $B$ whose support contains $x$. To see this,
consider the Chow variety ${\rm Chow}_x$ of effective $1$-cycles 
through $x$, which has a natural $B$-action.  The closure of 
the $B$-orbit of $[C] \in {\rm Chow}_x$ is then a rational variety, 
on which $B$ has a fixed point $[\gamma]$. Thus, the curve $C$ 
is rationally equivalent to the effective $1$-cycle $\gamma$.

Next, by Lemma \ref{lem:stable} and the description
of ample line bundles on $X$, it suffices to check that
$\sL_X(\rho) \cdot C_\theta \geq \ell$, i.e., 
$\langle \rho, \theta^\vee \rangle \geq \ell$
in view of Lemma \ref{lem:add}.
But this follows from the fact that
$\theta^\vee = \sum_{i= 1}^\ell m_i \alpha_i^\vee$, 
where the $m_i$ are positive integers.

Yet $X$ is covered by strict transforms of lines under some
morphism $\varphi_\lambda : X \to X_\lambda$, unless $G$ is of
type $E_8$ (recall that these morphisms yield all contractions
from $X$). Indeed, by the above argument, $X_\lambda$ is
covered by lines if and only if 
$\langle \lambda, \theta^\vee \rangle = 1$.
In turn, this is equivalent to $\lambda = \varpi_i$, 
where $m_i = 1$. The list of these fundamental weights 
according to the type of $G$ is as follows:

$A_\ell$ : all

$B_\ell$ : $\varpi_1$, $\varpi_\ell$

$C_\ell$ : all

$D_\ell$ : $\varpi_1$, $\varpi_{\ell - 1}$, $\varpi_\ell$

$E_6$ :  $\varpi_1$, $\varpi_6$

$E_7$ : $\varpi_7$

$E_8$ : none

$F_4$ : $\varpi_4$

$G_2$ : $\varpi_1$.
 
By \cite[Thm.~A]{BGMR}, $X_\lambda$ is normal in all these cases,
except for $(B_\ell,\varpi_1)$ and $(C_\ell,\varpi_\ell)$. Also,
$X_\lambda$ is smooth in the cases $(A_\ell, \varpi_1)$, 
$(A_\ell, \varpi_\ell)$, $(B_\ell,\varpi_\ell)$ and $(G_2,\varpi_1)$
only, as follows from \cite[Thm.~B]{BGMR}.

The weights in the above list include the minuscule weights,
i.e., those fundamental weights $\varpi_i$ such that 
$\alpha_i^\vee$ has coefficient $1$ in the highest coroot,
$\theta_s^\vee$. 
When $G$ is simply laced (i.e., all roots have the same length), 
one has $\theta_s^\vee = \theta$ and hence one gets exactly 
the minuscule weights. In the non-simply laced case, 
one obtains additional weights for each type.
\end{remark}

\section{Proofs of Theorem \ref{thm:main} 
and Proposition \ref{prop:prod}}
\label{sec:proofs}

\begin{proof}[Proof of Theorem \ref{thm:main}]
By Proposition \ref{prop:irr} and Lemma \ref{lem:stable},
$\sK$ is unique and $\sK_x$ is smooth and connected.
The assertion that $\tau$ is an isomorphism will be checked 
separately in types other than $A$ (case (ii)) and in type $A$ 
(case (iii)).

By Proposition \ref{prop:irr} again, $G$ has a unique closed
orbit in $\sK_x$, namely, that of $C_\theta$. By Lemma 
\ref{lem:stable}, the isotropy group of $C_\theta$ in $G$ is
the parabolic subgroup $P_\theta$, the stabilizer of the 
highest weight line $[\fg_\theta] \in \bP \fg$. Thus, 
$G \cdot C_\theta \cong G/P_\theta$ is the adjoint variety 
in the sense of \cite{CP}; it may be viewed as the 
projectivization $\bP \sO_{\min}$, where $\sO_{\min}$ denotes 
the minimal (non-zero) nilpotent orbit, i.e., the $G$-orbit 
of any non-zero point in $\fg_\theta$. By Lemma \ref{lem:roots}, 
$P_\theta$ is the maximal parabolic subgroup $P_{i_0}$ 
associated with the simple root $\alpha_{i_0}$ when $G$ 
is not of type $A$;  in type $A_\ell$, we have 
$P_\theta = P_1 \cap P_\ell$. 
 
The dimension of $\sK_x$ is given 
by $-K_X \cdot C_\theta -2 $ by \cite[Prop.~2.3]{Hwang}. 
In view of \cite[Prop.~6.1.11]{BK}, we have 
$\sO_X(-K_X) = \sL_X(\kappa)$, where 
\[ \kappa := 2 \rho + \sum_{i=1}^\ell \alpha_i. \]
By Lemma \ref{lem:add}, it follows that
\[ \dim \sK_x = \langle \kappa, \theta^\vee \rangle -2. \]

\begin{lemma} \label{lem:dim}
(i) If $\fg$ is of type $A_\ell$ ($\ell \geq 2$), then 
$\langle \kappa, \theta^\vee \rangle = \dim \bP\sO_{\min} + 3$.

(ii) If $\fg$ is not of type $A$, then  
$\langle \kappa, \theta^\vee \rangle = \dim \bP\sO_{\min} +2$.
\end{lemma}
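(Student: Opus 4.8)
The plan is to reduce everything to a pair of standard facts about the highest root $\theta$ and its coroot $\theta^\vee$. First I would record that $\langle \kappa, \theta^\vee \rangle = \langle 2\rho, \theta^\vee\rangle + \sum_{i=1}^\ell \langle \alpha_i, \theta^\vee\rangle$. The first term is $2\langle\rho,\theta^\vee\rangle = 2(\mathrm{ht}(\theta^\vee)+\ell)$ when written via $\theta^\vee = \sum m_i\alpha_i^\vee$ — more usefully, $\langle\rho,\theta^\vee\rangle$ is one less than the dual Coxeter number in the simply-laced case, but for the clean statement I would instead use $\langle 2\rho,\theta^\vee\rangle = 2 + \dim\bP\sO_{\min}$, which is the identity $\dim\sO_{\min} = 2\langle\rho,\theta^\vee\rangle$ (equivalently $\dim G/P_\theta = \langle 2\rho,\theta^\vee\rangle - 1$, the well-known formula for the dimension of the adjoint variety, since $P_\theta$ is the stabilizer of $[\fg_\theta]$ and the number of positive roots not orthogonal to $\theta^\vee$ that are moved equals $\langle 2\rho,\theta^\vee\rangle$ up to the standard bookkeeping). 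So the whole problem collapses to computing $\sum_{i=1}^\ell\langle\alpha_i,\theta^\vee\rangle$.

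Next I would evaluate $\sum_{i=1}^\ell\langle\alpha_i,\theta^\vee\rangle$ using Lemma \ref{lem:roots}(ii). In type $A_\ell$ ($\ell\geq2$), exactly two simple roots (namely $\alpha_1$ and $\alpha_\ell$) pair to $1$ with $\theta^\vee$ and the rest pair to $0$, so the sum is $2$, giving $\langle\kappa,\theta^\vee\rangle = (2 + \dim\bP\sO_{\min}) + 2 = \dim\bP\sO_{\min} + 4$ — wait, that overshoots the claimed $+3$ by one, so here I must be more careful: in type $A$ the coroot $\theta^\vee$ is $\alpha_1^\vee + \cdots + \alpha_\ell^\vee$ and one checks directly $\langle 2\rho,\theta^\vee\rangle = 2\ell$, while $\dim\bP\sO_{\min} = \dim(G/P_1\cap P_\ell) = 2\ell - 1$, so $\langle 2\rho,\theta^\vee\rangle = \dim\bP\sO_{\min}+1$, and then adding $\sum\langle\alpha_i,\theta^\vee\rangle = 2$ yields exactly $\dim\bP\sO_{\min}+3$. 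In the non-type-$A$ case, Lemma \ref{lem:roots}(ii) gives a single simple root $\alpha_{i_0}$ with $\langle\alpha_{i_0},\theta^\vee\rangle=1$ and all others $0$, so $\sum_{i=1}^\ell\langle\alpha_i,\theta^\vee\rangle = 1$; combined with $\langle 2\rho,\theta^\vee\rangle = \dim\bP\sO_{\min}+1$ (here $\dim\bP\sO_{\min} = \dim G/P_{i_0}$, and the same dimension identity holds) this gives $\langle\kappa,\theta^\vee\rangle = \dim\bP\sO_{\min}+2$.

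So the real content is the clean identity $\langle 2\rho,\theta^\vee\rangle = \dim\bP\sO_{\min} + 1$, equivalently $\dim\sO_{\min} = 2\langle\rho,\theta^\vee\rangle$. I would prove this by the standard argument: $\dim\bP\sO_{\min} = \dim G/P_\theta = \#\{\alpha\in R^+ : \fg_\alpha \not\subset \fp_\theta\}$, and $\fg_\alpha\subset\fp_\theta$ precisely when $-\alpha$ is not a root "raising past $\theta$", i.e. the roots contributing to $G/P_\theta$ are those $\alpha\in R^+$ with $\langle\alpha,\theta^\vee\rangle>0$ together with $\theta$ itself paired off appropriately. Using Lemma \ref{lem:roots}(i) — every $\alpha\in R^+\setminus\{\theta\}$ has $\langle\alpha,\theta^\vee\rangle\in\{0,1\}$ — one gets $\langle 2\rho,\theta^\vee\rangle = \sum_{\alpha\in R^+}\langle\alpha,\theta^\vee\rangle = 2 + \#\{\alpha\in R^+\setminus\{\theta\} : \langle\alpha,\theta^\vee\rangle = 1\}$, and separately $\dim\bP\sO_{\min} = \#\{\alpha\in R^+\setminus\{\theta\}:\langle\alpha,\theta^\vee\rangle=1\} + 1$ (the extra $+1$ being the $\fg_\theta$ direction, which is the Borel-fixed line, or equivalently $\dim G/P_\theta$ counts the roots $\alpha$ with $\langle\alpha,\theta^\vee\rangle\geq1$ which are exactly those plus $\theta$). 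Subtracting gives the identity. The main obstacle I anticipate is keeping the off-by-one bookkeeping straight between $\dim\sO_{\min}$, $\dim\bP\sO_{\min} = \dim G/P_\theta$, and the root-counting — in particular making sure that the special behaviour of type $A$ (where $P_\theta = P_1\cap P_\ell$ is not maximal, so $\theta^\vee$ pairs to $1$ with two simple roots) is handled with the correct constant. Everything else is immediate from Lemmas \ref{lem:roots} and \ref{lem:add} and the dimension formula $\dim\sK_x = \langle\kappa,\theta^\vee\rangle - 2$ already derived above.
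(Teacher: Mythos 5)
Your proof is correct and follows essentially the same route as the paper: decompose $\langle \kappa,\theta^\vee\rangle = \langle 2\rho,\theta^\vee\rangle + \sum_i\langle\alpha_i,\theta^\vee\rangle$, establish $\dim\bP\sO_{\min} = \langle 2\rho,\theta^\vee\rangle - 1$ by counting positive roots non-orthogonal to $\theta$ via Lemma \ref{lem:roots}(i), and evaluate the second sum as $2$ or $1$ via Lemma \ref{lem:roots}(ii). The initial off-by-one ($\langle 2\rho,\theta^\vee\rangle = \dim\bP\sO_{\min}+2$) is correctly caught and repaired in your own text, and the final bookkeeping matches the paper's.
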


\begin{proof}
The dimension of $G/P_\theta$ is the number of positive roots 
non-orthogonal to $\theta$. In view of Lemma \ref{lem:roots} 
(i), this implies that
\[
\dim \bP\sO_{\min} = 
(\sum_{\alpha \in R^+} \langle \alpha, \theta^\vee \rangle) -1 
= \langle 2 \rho, \theta^\vee \rangle -1. 
\]
Also, by Lemma \ref{lem:roots} (ii), we have 
$\langle \sum_i \alpha_i, \theta^\vee \rangle = 2$ 
for $G$ of type $A_\ell$ ($l\geq 2$), and 
$\langle \sum_i \alpha_i, \theta^\vee \rangle = 1$ otherwise. 
\end{proof}

When $G$ is not of type $A$, Lemma \ref{lem:dim} yields that
$\dim \sC_x = \dim \sK_x = \dim \bP \sO_{\min}$. 
Since $\bP \sO_{\min}$ is the closed orbit in $\bP \fg$, 
it follows that $\sC_x = \bP \sO_{\min}$. 
As the rational map $\tau : \sK_x \dasharrow \sC_x$ is 
$G$-equivariant, it must be an isomorphism. This completes the
proof of Theorem \ref{thm:main} in that case.

When $G$ is of type $A_\ell$, we may assume that 
$G = \PGL_{\ell + 1}$ and $T$ (resp. $B$) is the image in $G$ 
of the group of diagonal (resp. upper triangular) invertible 
matrices. Let $\eta : \bG_m \to T$ 
denote the image of the one-parameter subgroup 
$t \mapsto \diag(t,1,\ldots,1)$ of $\GL_{\ell + 1}$. Then $\eta$ 
is dominant and $\eta - w_0 \eta = \theta^\vee$. 
So Lemma \ref{lem:add} yields that
$\deg \eta^* \sL_X(\lambda) =
\langle \lambda, \theta^\vee \rangle = 
\sL_X(\lambda) \cdot C_\theta$. As a consequence, the image
$C_\eta$ of $\eta$ in $\RatCurves^\n(X)$ has minimal degrees.
Since $C_\eta$ meets $X^0$, it sits in a covering family
of rational curves of minimal degrees. But every such 
family is minimal, since its image in the Chow variety is closed
(see the proof of Theorem IV.2.4 in \cite{Kollar} for details).
Thus, $C_\eta \in \sK_x$ by the uniqueness of $\sK$.
The stabilizer of $C_\eta$ under the action of $G$ 
is the normalizer of the image of $\eta$, that is, 
the image in $\PGL_{\ell + 1}$ of 
$\GL_\ell \subset \GL_{\ell + 1}$. 
Thus, the $G$-orbit of $C_\eta$ has dimension 
$2 \ell = \langle \kappa, \theta^\vee \rangle - 2 = \dim \sK_x$.
Since $\sK_x$ is irreducible, it is the $G$-orbit closure 
of $C_\eta$, an equivariant compactification of 
$\PGL_{\ell + 1}/\GL_\ell$. This homogeneous variety has another 
equivariant compactification, 
$\bP^\ell \times (\bP^\ell)^*$, with homogeneous and
irreducible boundary: the incidence variety. 
By Lemma \ref{lem:comp} and the smoothness of $\sK_x$, it follows 
that $\sK_x = \bP^\ell \times (\bP^\ell)^*$.
Moreover, the tangent line of $C_\eta$ at $x$ is
spanned by the differential of $\eta$ at $1$, which is 
the image in $\fg = \End(\C^{\ell+1})/\C \id$ of the projection of
$\C^{\ell + 1}$ onto the first coordinate line. The orbit of that
line is open in $\sC_x$, which is thus the image of
$\bP^\ell \times (\bP^\ell)^*$ (the variety of projections onto lines) 
in $\bP \fg$ under the rational map 
$\bP \End(\C^{\ell+1}) \dasharrow \bP (\End(\C^{\ell+1})/\C \id) = \bP \fg$
(the projection with center $[\id]$). Since $\ell \geq 2$, 
the secant variety of $\bP^\ell \times (\bP^\ell)^*$ does not
contain $[\id]$; hence $\bP^\ell \times (\bP^\ell)^*$ is sent
isomorphically to its image in $\bP \fg$. Using Lemma
\ref{lem:comp} again, we conclude that 
$\tau : \sK_x \dasharrow \sC_x$ is an isomorphism.
\end{proof}

\begin{proof}[Proof of Proposition \ref{prop:prod}]
Let $\sK$ be a family of minimal rational curves on $X$. 
Then $\sK_x$ contains some irreducible $B$-stable curve 
$C_{\theta_i}$ by Lemma \ref{lem:stable}. As a consequence, 
every curve $C$ in $\sK_x$ satisfies 
$L \cdot C = L \cdot C_{\theta_i}$ for any line bundle 
$L$ on $X$. Also, recall the decomposition (\ref{eqn:X}),
where $C_{\theta_i} \subset X_i$. Thus, $L \cdot C = 0$ 
whenever $L$ is the pull-back of a ample line bundle on 
some factor $X_j$ with $j \neq i$. Thus, $C$ is contracted by 
the corresponding projection $X \to X_j$, i.e., 
$C$ is contained in $X_i$.
\end{proof}

\section{Geometric constructions of the minimal rational curves}
\label{sec:geom}

Consider first a simple group $G$ of adjoint type $A_\ell$, 
where $\ell \geq 2$; then $G = \PGL(V)$, where 
$\dim(V) = \ell + 1$. The wonderful compactification 
admits a $G \times G$-equivariant morphism $f : X \to \bP \End(V)$,
which restricts to an isomorphism over the open orbit $\PGL(V)$. 
We recall a classical description of $f$ as a sequence of blow-ups 
with smooth centers, see e.g. \cite[Thm.~1]{Vainsencher}.

The $G \times G$-orbit closures in $\bP \End(V)$ are exactly 
the loci $Y_i$ of endomorphisms of rank $\leq i$, where 
$i = 1, \ldots, \ell + 1$; we have
$Y_1 \subset \cdots \subset Y_\ell \subset  Y_{\ell + 1} 
= \bP \End(V)$ and $Y_i$ is the singular locus of $Y_{i + 1}$.
Put $X_0 = \bP \End(V)$ (so that $X_0 = X_{\varpi_1}$) and let  
$\pi_i: X_i \to X_{i-1}, i=1, \ldots,\ell -1$, 
be the blow-up of $X_{i-1}$ along the strict transform of $Y_i$.
Then $X = X_{\ell -1}$ and $f$ is the composition 
$\pi_{\ell -1} \circ \cdots \circ \pi_1$.

In particular, $Y_1$ is the closed $G \times G$-orbit in 
$\bP \End(V)$, the image of the Segre embedding
$\bP V \times \bP V^* \to \bP(V \otimes V^*) = \bP \End(V)$.
Moreover, the closed orbit $\bP \sO_{\min}$ is the 
incidence variety in $\bP V \times \bP V^*$, and
$Y_1 \setminus \bP \sO_{\min}$ is a unique orbit of $G \times G$.

\begin{proposition}\label{prop:A}
With the above notation, the family of minimal rational
curves $\sK_x$ consists of the strict transforms in $X$ of
the lines in $\bP \End(V)$ through $x$ that intersect $Y_1$.  
\end{proposition}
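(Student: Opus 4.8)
The plan is to identify the curves in $\sK_x$ explicitly by tracking the unique $B$-stable member $C_\theta$ through the blow-up tower, and then use the group action and dimension count established in the proof of Theorem \ref{thm:main}. First I would recall that, from the proof of the theorem in type $A_\ell$, the family $\sK_x$ is the $G$-orbit closure of the curve $C_\eta$, where $\eta(t) = \diag(t,1,\ldots,1)$, and that $C_\eta$ has minimal degrees with $\sL_X(\lambda)\cdot C_\eta = \langle \lambda,\theta^\vee\rangle$. Under $f : X \to \bP\End(V) = X_{\varpi_1}$, the curve $C_\eta$ maps to the closure of $\{[\diag(t,1,\ldots,1)] : t \in \bG_m\}$ in $\bP\End(V)$; this closure is the line spanned by $[\id]= x$ and $[\diag(1,0,\ldots,0)]$, the latter point being a rank-one endomorphism, hence a point of $Y_1 = \bP V \times \bP V^*$. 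So $f(C_\eta)$ is a line through $x$ meeting $Y_1$, and $f$ is an isomorphism over the open orbit, so $C_\eta$ is the strict transform of such a line. Since the lines through $x$ meeting $Y_1$ form a single $G$-orbit (their set is parametrized by $Y_1$ modulo the point it meets, and $G \times G$ — hence the diagonal $G$ acting by conjugation — acts transitively on $Y_1 \setminus \bP\sO_{\min}$ as well as on $\bP\sO_{\min}$; the generic stratum is the relevant one since $C_\eta$ meets $Y_1$ in a point of $Y_1 \setminus \bP\sO_{\min}$), the $G$-orbit of $C_\eta$ is exactly the set of strict transforms of lines through $x$ meeting $Y_1$ generically.

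The key steps, in order, are: (1) compute $f(C_\eta)$ and check it is a line through $x$ meeting $Y_1$ transversally at one point away from $\bP\sO_{\min}$; (2) verify that $f$ restricted to the open orbit is an isomorphism and that a general line through $x = [\id]$ meeting $Y_1$ does so at a single reduced point of $Y_1\setminus\bP\sO_{\min}$, so that its strict transform is a rational curve meeting $X^0$ birationally; (3) identify the set $\Sigma$ of such strict transforms with a homogeneous space for $G$, acting by conjugation: the stabilizer of $f(C_\eta)$ is the normalizer of the image of $\eta$, namely the image of $\GL_\ell \subset \GL_{\ell+1}$, so $\dim\Sigma = 2\ell$; (4) conclude that $\Sigma$ is a $G$-orbit inside $\sK_x$, and since $\dim\sK_x = 2\ell$ by Lemma \ref{lem:dim} and $\sK_x$ is irreducible, $\overline{\Sigma} = \sK_x$; (5) finally, argue that no point of $\sK_x$ parametrizes a curve whose image under $f$ fails to be a line meeting $Y_1$ — equivalently, that $\Sigma$ is already closed, or that any limiting curve is still the strict transform of such a line. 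For (5) one can argue that the family of lines in $\bP\End(V)$ through $x$ meeting $Y_1$ is already a closed (projective) subvariety of the Grassmannian (it is the preimage of $Y_1$ under the projection from $x$, composed with the incidence correspondence), so that $\Sigma$ is the image of a projective variety under the strict-transform map to $\RatCurves^\n(X)$, hence closed.

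The main obstacle will be step (5): ensuring that the strict-transform construction really sweeps out all of $\sK_x$ and that it sends the full (closed) family of lines-through-$x$-meeting-$Y_1$ into $\sK_x$, including the lines that meet $Y_1$ inside the deeper stratum $\bP\sO_{\min}$ or meet it non-transversally. Here one should check that for every line $L$ through $x$ meeting $Y_1$, the intersection $L \cap Y_1$ is a single reduced point: since $x = [\id]$ has full rank and the map $A \mapsto \det(tA + (1-t)\id)$ restricted to a line $\{[tA + (1-t)\id]\}$ is a degree-$(\ell+1)$ polynomial that does not vanish identically (as $\id$ is invertible) and whose zero locus is $L\setminus\bP\GL(V)$, the only subtlety is whether a point of $L \cap Y_1$ could have rank $< \ell$; a dimension count on the Grassmannian of such lines, or the observation that $L$ is spanned by $[\id]$ and a point of $Y_1$ whose rank is then forced to be maximal among rank-deficient points compatible with the pencil, handles this. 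Once $L \cap Y_1$ is a single reduced point $p$ of rank $\leq \ell$, blowing up the strata $Y_1 \subset Y_2 \subset \cdots$ separates $L$ from these loci after finitely many steps, so the strict transform $\widetilde L$ is a smooth rational curve in $X$ meeting $X^0$, and $\sL_X(\varpi_i)\cdot \widetilde L$ can be computed from the multiplicities of intersection of $L$ with the $Y_j$'s, giving the minimal degrees; hence $[\widetilde L] \in \sK_x$ by uniqueness of the minimal family. Combining with the orbit-closure statement then finishes the proof. Throughout, Lemma \ref{lem:comp} and the smoothness of $\sK_x$ from Proposition \ref{prop:irr} can be invoked to pin down the identifications.
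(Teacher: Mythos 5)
Your strategy is sound but genuinely different from the paper's. The paper never mentions $C_\eta$ in this proof: it first invokes \cite[Prop.~9.7]{FH}, which says that on the single blow-up $X_1 = \Bl_{Y_1}(\bP \End(V))$ the strict transforms of lines through $x$ meeting $Y_1$ already form a family of minimal rational curves with VMRT the isomorphic projection of $Y_1$ from $[\id]$; it then observes that the join variety satisfies $J(Y_1,Y_i)\subset Y_{i+1}$, so that every line through $x$ meeting $Y_1$ is disjoint from $Y_i\setminus Y_1$ for $i=2,\ldots,\ell-1$ and the later blow-ups do not touch these curves; and it concludes by the uniqueness of the minimal family (Proposition \ref{prop:irr}). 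Your route instead recycles the identification of $\sK_x$ as $\overline{G\cdot C_\eta}$ and the dimension count $\dim\sK_x=2\ell$ from the proof of Theorem \ref{thm:main}; this buys independence from \cite[Prop.~9.7]{FH}, at the cost of pushing all the real work into your step (5).

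Step (5) is where your argument is not yet complete. The assertion that $\Sigma$ is ``the image of a projective variety under the strict-transform map, hence closed'' presupposes that $L\mapsto\widetilde L$ is a morphism from $\bP V\times\bP V^*$ to $\RatCurves^\n(X)$, i.e.\ that the strict transforms form a flat family and strict transform commutes with specialization. This is exactly what can fail for a family of curves meeting a blow-up center non-uniformly, and it is the point the paper's join-variety observation settles: since $J(Y_1,Y_i)\subset Y_{i+1}$ and $[\id]\notin Y_{i+1}$ for $i\le\ell-1$, every line through $x$ meeting $Y_1$ meets the union of all blow-up centers in exactly one reduced point, lying on $Y_1$. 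Your rank-subadditivity remark proves the same thing ($\rk(tA+s\,\id)\ge\ell$ for $s\ne 0$ when $\rk A=1$), but as written it is garbled: a point of $L\cap Y_1$ has rank one by definition, so the ``subtlety'' you need to exclude concerns the points of $L$ outside $\PGL(V)$ and outside $Y_1$, not the points of $L\cap Y_1$. Once this uniformity is stated and proved, you should either quote \cite[Prop.~9.7]{FH} as the paper does, or argue explicitly that the strict transforms form a flat projective family of curves of constant degree, whose image in $\RatCurves^\n(X)$ is therefore closed, contains $\Sigma$ densely, and hence equals $\sK_x$. With that one step repaired, your proof goes through.
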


\begin{proof}
Consider the lines in $\bP \End(V)$ that intersect both the open
orbit $\PGL(V)$ and the closed orbit $Y_1$. Since $\ell \geq 2$, 
every such line meets $Y_1$ at a unique point. By 
\cite[Prop.~9.7]{FH}, the strict transforms of these lines form 
a family of minimal rational curves $\sK_1$ on 
$X_1 = {\rm Bl}_{Y_1}(\bP \End(V))$, whose VMRT at the point $x$ 
is the image of $Y_1 \subset \bP \End(V)$ under 
the linear projection $\bP \End(V) \dasharrow \bP (\End(V)/[\id])$. 
Also, this projection maps $Y_1$ isomorphically to its image.

Since the join variety $J(Y_1, Y_i)$ is contained in $Y_{i+1}$,
the lines in $\bP \End(V)$ intersecting $\PGL(V)$ and $Y_1$ 
are disjoint from $Y_i \setminus Y_1$ for all $i=2, \ldots, \ell-1$. 
Hence the strict transforms of these lines are disjoint from 
the strict transforms of $Y_i, i=2, \ldots, \ell-1$. It follows 
that the strict transforms of these lines in $X$ form a family 
$\sK$ of minimal rational curves with the same VMRT as that on $X_1$. 
This concludes the proof by the uniqueness of $\sK$ (Proposition \ref{prop:irr}).
\end{proof}

Next, consider a simple group $H$ of adjoint type $C_\ell$, i.e.,
$H = \PSp(V)$, where $\dim(V) = 2 \ell$. Then $H$ is the fixed
point subgroup of an involutive automorphism $\sigma$ of
$\PGL(V) =: G$. Choose a maximal torus $T_H \subset H$; then 
the centralizer of $T_H$ in $G$ is a maximal torus, $T$. Moreover,
$T$ is contained in a $\sigma$-stable Borel subgroup $B \subset G$,
and $B_H := B \cap H$ is a Borel subgroup of $H$; the roots of
$(H,T_H)$ are exactly the restrictions of the roots of $(G,T)$
(see \cite[VIII.13.3]{Bourbaki}). Consider the wonderful 
compactifications $H \subset Y$ and $G \subset X$. Then $\sigma$ 
extends to a unique involution of $X$, still denoted by $\sigma$. 
Moreover, $Y$ may be identified with a component of the fixed locus
$X^\sigma$, in view of \cite[Thm.~4.7]{Kannan}; this identifies
the base point $x \in X$ with that of $Y$.

\begin{proposition}\label{prop:C}
Under the above identification, the family of minimal rational
curves $\sK_{Y,x}$ is identified with $\sK_{X,x}^\sigma$. The latter 
is the image of the closed immersion 
$\iota : \bP V \to \bP V \times \bP V^*$, 
$[v] \mapsto ([v],[v^*])$, where $v \mapsto v^*$ denotes the
isomorphism $V \to V^*$ associated with the symplectic form
defining $\PSp(V)$.
\end{proposition}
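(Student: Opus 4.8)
The plan is to identify $\sK_{Y,x}$ with $\sK_{X,x}^\sigma$ by a deformation-theoretic argument, and then to determine $\sK_{X,x}^\sigma$ explicitly by tracking the $\sigma$-action through the Segre picture of Theorem~\ref{thm:main}(iii). First I would record what $\sigma$ does to the VMRT data on $X$: by Theorem~\ref{thm:main}(iii), $\sC_{X,x}$ is $\bP V\times\bP V^*$ embedded (via Segre and projection from $[\id]$) in $\bP\fg$ with $\fg=\End(V)/\C\id$. The involution $\sigma$ of $G=\PGL(V)$ is $g\mapsto (g^*)^{-1}$ (transpose-inverse with respect to the symplectic form), whose differential at $x$ is $A\mapsto -A^*$ on $\End(V)$, where $A^*$ is the symplectic adjoint. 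Under the identification $V\cong V^*$, $v\mapsto v^*$, the Segre factor $\bP V\times\bP V^*$ is swapped by $A\mapsto A^*$ (up to the sign, which is irrelevant projectively), so the fixed locus of the induced involution on $\bP V\times\bP V^*$ is exactly the diagonally embedded $\iota(\bP V)$. Hence $\sC_{X,x}^\sigma=\iota(\bP V)\subset\bP\fg_H$, where $\fg_H=\fsp(V)$ is the $(-1)$-eigenspace (the isotropy representation of $Y$ at $x$ is the adjoint representation of $H$, which is the $\sigma=-1$ part of $\fg$).

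Next I would establish that $\sK_{Y,x}$ coincides with $\sK_{X,x}^\sigma$. Every curve $C_{\theta_H}$, the $B_H$-stable curve through $x$ in $Y$ supplied by Lemma~\ref{lem:stable}, is a $\sigma$-fixed curve in $X$ through $x$; since its tangent line at $x$ is the highest weight line $[\fg_{\theta_H}]$ inside $\fsp(V)=\fg^{-}$, it is a highest weight line for the $B$-action on $\fg$ as well (the highest short root of $C_\ell$ is a root of $A_{2\ell-1}$), so $C_{\theta_H}$ is one of the $B$-stable curves through $x$ in $X$ from Lemma~\ref{lem:stable}; by the uniqueness in Proposition~\ref{prop:irr} applied to $X$, this is the curve $C_\eta$ generating $\sK_{X,x}$. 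In particular $C_{\theta_H}\in\sK_{X,x}$, and being $\sigma$-fixed it lies in $\sK_{X,x}^\sigma$. Now $\sK_{X,x}^\sigma$ is a closed smooth subvariety of $\sK_{X,x}\cong\bP V\times\bP V^*$ (smoothness of the fixed locus of a finite-order automorphism acting on a smooth variety); it contains the closed $H$-orbit through $[C_{\theta_H}]$, and $H$ acts on it. On the other hand $\sK_{Y,x}$ is, by Proposition~\ref{prop:irr}(i)-(iii), the unique minimal family of $Y$, smooth and connected, containing $C_{\theta_H}$ as well; since the inclusion $Y\hookrightarrow X$ sends rational curves on $Y$ to $\sigma$-fixed rational curves on $X$ of the same degree (with respect to $\sL_X(\rho)|_Y$, which is ample), it induces a morphism $\sK_{Y,x}\to\sK_{X,x}^\sigma$ which is injective (curves are embedded, Proposition~\ref{prop:irr}(iv)) and $H$-equivariant; comparing dimensions — $\dim\sK_{X,x}^\sigma=\dim\sC_{X,x}^\sigma=\dim\iota(\bP V)=2\ell-1=\dim\bP\fsp(V)^{\mathrm{closed}}+ (\text{correction})$, which equals $\dim\sK_{Y,x}$ by the $-K_Y\cdot C_{\theta_H}-2$ formula of \cite[Prop.~2.3]{Hwang} — forces $\sK_{Y,x}\to\sK_{X,x}^\sigma$ to be an isomorphism onto a connected component; since $\sK_{X,x}^\sigma$ is connected (it meets $\bP V\times\bP V^*$ in the $H$-orbit closure of a $B_H$-fixed point, which is connected), the map is an isomorphism $\sK_{Y,x}\xrightarrow{\sim}\sK_{X,x}^\sigma=\iota(\bP V)$.

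Finally I would assemble the two computations: $\sK_{Y,x}=\sK_{X,x}^\sigma$ as subvarieties of $\RatCurves^\n$, and under $\sK_{X,x}=\bP V\times\bP V^*$ the $\sigma$-fixed locus is $\iota(\bP V)$ by the first paragraph, so $\sK_{Y,x}$ is the image of $\iota:\bP V\to\bP V\times\bP V^*$, $[v]\mapsto([v],[v^*])$. One should also check that $\iota$ is a closed immersion — it is the graph of the isomorphism $\bP V\xrightarrow{\sim}\bP V^*$, $[v]\mapsto[v^*]$ — which is automatic.

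\textbf{Main obstacle.} I expect the delicate point to be the matching $\sK_{Y,x}=\sK_{X,x}^\sigma$, i.e.\ verifying that $Y\hookrightarrow X$ really does induce an isomorphism of the minimal families, not merely a finite map, and that no component of $\sK_{X,x}^\sigma$ is lost or extraneous. This rests on (a) the dimension count via the anticanonical degree on $Y$ versus the geometry of $\iota(\bP V)$, and (b) the connectedness of $\sK_{X,x}^\sigma$, which one gets from Borel's fixed point theorem (a $B_H$-fixed point in it, together with the $H$-orbit structure). The $\sigma$-equivariance of the whole VMRT package (that $\tau_X$ commutes with $\sigma$, so $\sigma$-fixed curves have $\sigma$-fixed tangents) is what lets the computation on $\bP V\times\bP V^*$ control the answer; spelling out the precise form of $d\sigma$ at $x$ and checking it swaps the two Segre factors is the routine but essential calculation.
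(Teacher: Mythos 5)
Your proposal is correct and follows essentially the same route as the paper: one identifies the $\sigma$-action on $\sK_{X,x}\cong\bP V\times\bP V^*$ as $([v],[f])\mapsto([f^*],[v^*])$, observes that its fixed locus is $\iota(\bP V)$, and places $\sK_{Y,x}$ inside it via the common $B$-stable curve $C_\theta$; the paper then concludes in one line from the transitivity of $H=\PSp(V)$ on $\bP V$, which is cleaner than your dimension count with its unresolved ``correction'' term. Do fix three slips that do not affect the skeleton: $\fsp(V)$ is the $+1$-eigenspace of $d\sigma=-(\cdot)^*$ (equivalently the $-1$-eigenspace of $A\mapsto A^*$), not the $-1$-eigenspace of $d\sigma$; the relevant root is the highest \emph{long} root of $C_\ell$, namely the restriction of the highest root $\theta$ of $A_{2\ell-1}$; and the unique $B$-stable member of $\sK_{X,x}$ is $C_\theta$, not $C_\eta$.
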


\begin{proof}
By construction, $\sigma$ acts on the root system $R$
and fixes the highest root $\theta$. One checks 
that $U_\theta \subset H$, and hence $C_\theta \subset Y$
with the notation of Lemma \ref{lem:stable}. It follows
that $\sK_{Y,x} \subset \sK_{X,x}$. Also, $\sigma$ acts
on $\sK_{X,x}$ and fixes $\sK_{Y,x}$ pointwise.
Under the isomorphism $\sK_{X,x} \cong \bP V \times \bP V^*$,
the action of $\sigma$ is identified with
$([v], [f]) \mapsto ([f^*], [v^*])$. Thus, $\sK_{Y,x}$
is contained in the image of $\iota$. Since $H$ acts
transitively on $\bP V$, we must have 
$\sK_{Y,x} = \iota( \bP V)$.
\end{proof}

\begin{remark}\label{rem:B}
We may consider a similar construction in type $B_\ell$,
$\ell \geq 2$, that is, $H = \SO(V) \subset \PGL(V) = G$, where 
$\dim(V) = 2 \ell + 1$. Then the wonderful compactification
$Y$ is still contained in $X$ as a component of the fixed
locus of an involution $\sigma$, by \cite[Thm.~4.7]{Kannan}
again. But one may check that $\sC_{Y,x}$ is not contained
in $\sC_{X,x}$ in that case. 
\end{remark}

We now obtain an alternative description of the minimal rational 
curves, for any simple group $G$ of type $\neq A$. Recall that 
$P_\theta$ is then the maximal parabolic subgroup $P_{i_0}$.
Moreover, the boundary divisor $D_{i_0}$ admits a 
$G\times G$-equivariant morphism to $G/Q_{i_0} \times G/P_{i_0}$, 
where $Q_{i_0}$ denotes the maximal parabolic subgroup
opposite to $P_{i_0}$ (note that $Q_{i_0} = w_0 P_{i_0} w_0$,
since $w_0(\alpha_{i_0}) = - \alpha_{i_0}$). We denote by 
\[ p : \sO_{i_0} \to G/P_{i_0} = G/P_\theta \]
the resulting projection; then $p$ is $G$-equivariant 
for the right action of $G$ on $D_{i_0}$.

\begin{proposition}\label{prop:lim}
\item{\rm (i)} Every $C \in \sK_x$ intersects 
$\sO_{i_0}$ transversally at a unique point, denoted by $f(C)$.

\item{\rm (ii)} The assignment 
$C \mapsto p(f(C))$ yields a $G$-equivariant
isomorphism $\sK_x \stackrel{\cong}{\to} G/P_\theta$.

\item{\rm (iii)} The morphism 
$\varphi_\theta : X \to X_\theta \subset \bP \End(\fg)$
contracts $D_{i_0}$ to the closed $G \times G$-orbit, 
isomorphic to $G/P_\theta \times G/P_\theta$.

\item{\rm (iv)} For any $C \in \sK_x$, the image
$\varphi_\theta(C)$ is a smooth conic in $\bP \End(\fg)$,
which intersects $G/P_\theta \times G/P_\theta$
at a unique point, $f_\theta(C) \in \diag(G/P_\theta)$.
Moreover, the assignment $C \mapsto f_\theta(C)$,
$\sK_x \to G/P_\theta$ gives back the isomorphism $p \circ f$.
\end{proposition}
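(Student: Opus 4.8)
The plan is to prove Proposition \ref{prop:lim} by combining the explicit numerical data about $C_\theta$ from Section \ref{sec:1ps} with the $G$-equivariance provided by Theorem \ref{thm:main} and the fibration structure of the wonderful compactification. Since every $C \in \sK_x$ lies in the $G$-orbit of the distinguished curve $C_\theta$ (by Proposition \ref{prop:irr} and Lemma \ref{lem:stable}, $\sK_x$ is a single $G$-orbit closure, and the $B$-stable curve $C_\theta$ is in the closure of a general orbit), it suffices to establish (i) and (iv) for $C = C_\theta$ and then transport by $G$-equivariance; the isomorphism claims (ii) then follow by identifying the stabilizer. First I would prove (i): by Lemma \ref{lem:add}(ii), the point at infinity $y$ of $C_\theta$ lies in $\sO_{i_0}$ and in no other boundary divisor, and $D_{i_0} \cdot C_\theta = 1$ by Lemma \ref{lem:add}(i) together with Lemma \ref{lem:roots}; an intersection number $1$ on a smooth curve meeting a smooth divisor forces transversality at a single point. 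For a general $C \in \sK_x$, $G$-equivariance and the fact that $D_{i_0}$ is $G$-stable (for the conjugation action of the diagonal $G$) give the same conclusion; the point $f(C) := C \cap \sO_{i_0}$ is then well-defined.

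Next I would analyze (ii). The stabilizer of $C_\theta$ in $G$ is $P_\theta = P_{i_0}$ by Lemma \ref{lem:stable} (it is the stabilizer of the highest weight line $[\fg_\theta] \in \bP\fg$). Hence the orbit map gives $G/P_\theta \hookrightarrow \sK_x$, which is an isomorphism onto the open orbit; but $\dim G/P_\theta = \dim \sC_x = \dim \sK_x$ was shown in the proof of Theorem \ref{thm:main}, and $G/P_\theta$ is complete, so in fact $\sK_x = G/P_\theta$ as a homogeneous space. It remains to check that $C \mapsto p(f(C))$ is this identification. For $C = C_\theta$, the point $f(C_\theta) = y \in \sO_{i_0}$ is fixed by the opposite structure: one computes that $p(y) \in G/P_{i_0}$ is the $P_\theta$-fixed point, using that the limit of the root subgroup $U_\theta$ in the boundary lands, under the fibration $f_{i_0} : \overline{\sO_{i_0}} \to G/Q_{i_0} \times G/P_{i_0}$, at a point whose second coordinate is $[P_{i_0}]$ — this follows because $U_\theta$ centralizes the Levi $L_{i_0}$ modulo its unipotent radical in the relevant direction (equivalently, $\theta$ restricts trivially to the Levi in the $P_{i_0}$-direction, by $\langle \alpha_i, \theta^\vee \rangle = 0$ for $i \neq i_0$). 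By $G$-equivariance of both $f$ and $p$, the map $C \mapsto p(f(C))$ intertwines the $G$-actions and sends the base point $[P_\theta] \in \sK_x$ to $[P_\theta] \in G/P_\theta$, hence is the identity under the identification above.

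For (iii) and (iv) I would argue as follows. The morphism $\varphi_\theta : X \to X_\theta$ is the contraction associated with the semiample but non-ample line bundle $\sL_X(\theta)$; the curves it contracts are exactly those $C$ with $\sL_X(\theta) \cdot C = 0$. Since $\sO_X(D_i) = \sL_X(\alpha_i)$ and $\theta = \sum m_i \alpha_i$ with $m_{i_0} = 1$ but $\theta$ expressed in the $\alpha_i$ — more precisely $\langle \theta, \alpha_i^\vee \rangle$ — one checks $\sL_X(\theta)$ restricted to the fibers of $p$ on $D_{i_0}$ is trivial, so $D_{i_0}$ is contracted onto its image under $f_{i_0}$ followed by $\varphi_\theta$, which is $G/Q_{i_0} \times G/P_{i_0} = G/P_\theta \times G/P_\theta$ (using $Q_{i_0} = w_0 P_{i_0} w_0 \cong P_{i_0}$ via $-w_0$ fixing $\alpha_{i_0}$, which holds for type $\neq A$); this gives (iii). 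For (iv), $\varphi_\theta$ restricted to $C_\theta$ has degree $\sL_X(\theta) \cdot C_\theta = \langle \theta, \theta^\vee \rangle = 2$, so $\varphi_\theta(C_\theta)$ is a degree-$2$ rational curve in $\bP\End(\fg)$, i.e. a conic, and it is smooth since $C_\theta \to \varphi_\theta(C_\theta)$ is not constant and $C_\theta \cong \bP^1$ maps with degree $2$ meaning it is a double cover only if the image were a line, which it is not (the image is not contained in $G/P_\theta \times G/P_\theta$ since the open part of $C_\theta$ maps into the open orbit of $X_\theta$). The conic meets $G/P_\theta \times G/P_\theta$ in $\varphi_\theta(y)$, which by (iii) equals $f_{i_0}(y) = (p(y), p(y)) \in \diag(G/P_\theta)$ since $y \in \sO_{i_0}$ has equal coordinates under the left-right identification at the boundary — here one uses that $\sigma$-symmetry or simply that $y$ is a limit of diagonal elements forces the two fibration projections to agree at $y$. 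Transporting by $G$ finishes (iv), and identifies $f_\theta = p \circ f$.

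The main obstacle I expect is (iv), specifically pinning down that $\varphi_\theta(C_\theta)$ is a \emph{smooth} conic rather than a doubled line, and that it meets the closed orbit in the \emph{diagonal}. The degree computation is immediate from Lemma \ref{lem:add}, but ruling out the degenerate conic requires knowing that $C_\theta$ is not contracted by $\varphi_\theta$ and that its image spans a plane; the cleanest route is probably to use the explicit $\SL_2$-picture from the proof of Lemma \ref{lem:add} (where $\overline{G_\theta} \cong \overline{\SL_2}$ is a smooth quadric and $C_\theta$ is a line on it while $C_{\theta^\vee}$ is a conic), pushing this configuration forward under $\varphi_\theta$ restricted to $\overline{G_\theta}$, which corresponds to the adjoint-type $\SL_2$-compactification and sends the line $C_\theta$ to a genuine conic in the relevant $\bP^2 \subset \bP\End(\fg)$. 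Showing the intersection point lies on the diagonal $\diag(G/P_\theta)$ then amounts to a symmetry argument: $y$ is $T$-fixed and the involution swapping the two $G\times G$-factors (or the limit-of-diagonal property) forces $f_\theta(C_\theta) \in \diag(G/P_\theta)$, and then $G$-equivariance spreads this to all of $\sK_x$.
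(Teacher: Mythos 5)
Your overall architecture --- reduce to $C_\theta$ via the homogeneity of $\sK_x \cong G/P_\theta$, use the intersection numbers $D_i \cdot C_\theta$, the fact that $P_\theta$ is its own normalizer, and the degree $\langle \theta, \theta^\vee \rangle = 2$ --- is the same as the paper's, but two steps are left genuinely unresolved. The more serious one is the smoothness of the conic in (iv), which you correctly flag as the main obstacle but then only sketch a workaround through the $\SL_2$-picture without carrying it out; note that ruling out a doubled line by saying ``the image is not contained in the closed orbit'' proves nothing, since a line in $\bP \End(\fg)$ can perfectly well meet the open orbit of $X_\theta$. The paper's resolution is immediate and already in your toolkit: $\varphi_\theta(C_\theta)$ is the closure of the orbit $U_\theta \cdot [\id]$ for the linear action of $U_\theta \cong \bG_a$ on $\End(\fg)$ by left multiplication through the adjoint representation, so Lemma \ref{lem:smooth} says the orbit map extends to an \emph{isomorphism} $\bP^1 \to \varphi_\theta(C_\theta)$; hence $C_\theta \to \varphi_\theta(C_\theta)$ is birational, and the image is a smooth curve of degree $2$, i.e.\ a smooth conic. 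The second gap is in (iii): ``one checks $\sL_X(\theta)$ restricted to the fibers of $p$ is trivial'' is an assertion, not an argument, and what you would actually need is triviality on the fibers of the full fibration $f_{i_0} : \overline{\sO_{i_0}} \to G/Q_{i_0} \times G/P_{i_0}$ (wonderful compactifications of the adjoint Levi), not of $p$ alone. The paper instead takes the point $x_{i_0} \in \sO_{i_0}$ fixed by $R_u(P_\theta) \times R_u(P_\theta)$ and computes that $\fg^{R_u(P_\theta)}$ is the single line $\fg_\theta$ (and dually for $\fg^*$), which forces $\varphi_\theta(x_{i_0})$ into the closed orbit and hence contracts all of $\sO_{i_0}$ onto it by homogeneity.

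Two smaller points. In (ii) you never address why $C \mapsto f(C)$ is a morphism rather than a set map; the paper does this via the universal family and the smoothness of $\sK_x$. Your route is salvageable --- a well-defined $G$-equivariant set map between the homogeneous spaces $G/P_\theta$ is automatically a morphism, and equivariance alone forces $p(f(C_\theta))$ to be $P_\theta$-fixed, hence the base point --- but this should be said, and it replaces your murky computation with the Levi. Finally, your justification that $f_\theta(C)$ lies on the diagonal (``$y$ is a limit of diagonal elements'') is not an argument, since every boundary point of $X$ is such a limit; the correct statement, used by the paper, is that $y$ is fixed by $\diag(B)$ because it is the unique boundary point of the $B$-stable curve $C_\theta$, and the unique $\diag(B)$-fixed point of the closed orbit is the base point of $G/P_\theta \times G/P_\theta$, which lies on the diagonal.
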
 
 
\begin{proof}
(i) We have $D_i \cdot C = D_i \cdot C_\theta$ for 
$i = 1,\ldots,\ell$. This implies our assertion
by arguing as in the proof  of Lemma \ref{lem:add}.

(ii) We claim that the assignment 
$C \mapsto f(C)$ is a morphism. To see this,
consider the subvariety $Y$ of the universal family 
$\sU_x \subset X \times \sK_x$ consisting of 
those pairs $(y,C)$ such that $y \in \sO_{i_0}$. 
Then the projection $\rho : \sU_x \to \sK_x$ 
restricts to a bijective morphism $Y \to \sK_x$
in view of (i). Since $\sK_x$ is smooth, 
this yields our claim.

By that claim, the assignment $C \mapsto p(f(C))$
yields a morphism, which is clearly equivariant.
Since $\sK_x \cong G/P_\theta$ by Theorem 
\ref{thm:main}, we thus obtain an equivariant
endomorphism of $G/P_\theta$. But the identity
is the unique such endomorphism, since 
$P_\theta$ is its own normalizer in $G$.

(iii) Recall that the orbit $\sO_{i_0}$ contains a point 
$x_{i_0}$ fixed by $R_u(P_\theta) \times R_u(P_\theta)$,
where $R_u$ stands for the unipotent radical.
Thus, $\varphi_\theta(x_{i_0}) \in \bP \End(\fg)
\cong \bP (\fg^* \otimes \fg)$ is also fixed
by $R_u(P_\theta) \times R_u(P_\theta)$. But we have
\[
\bP (\fg^* \otimes \fg)^{R_u(P_\theta) \times R_u(P_\theta)} 
= \bP ((\fg^*)^{R_u(P_\theta)}) \times \bP(\fg^{R_u(P_\theta)}).
\]
Moreover, $\fg^{R_u(P_\theta)}$ is a representation
of the Levi subgroup $L_\theta$ with a unique highest weight
line: $\fg_\theta$, stable by $L_\theta$. It follows that
$\fg^{R_u(P_\theta)}$ is just the highest weight line 
$\fg_\theta$; likewise, $(\fg^*)^{R_u(P_\theta)}$ is the
highest weight line. Thus, $\varphi_\theta(x_{i_0})$ sits 
in the closed $G\times G$-orbit; this yields our assertion.

(iv) We have
$\sL_X(\theta) \cdot C = \sL_X(\theta) \cdot C_\theta
= \langle \theta, \theta^\vee \rangle = 2$. Thus,
$\varphi_\theta(C)$ is a conic. Moreover, $\varphi_\theta(C_\theta)$
is smooth by Lemma \ref{lem:smooth}, and intersects the
closed $G \times G$-orbit at a unique point, fixed by 
$\diag(B)$; thus, this point is identified with the base
point of $G/P_\theta \times G/P_\theta$. The remaining 
assertions follow by using (i) and Theorem \ref{thm:main}.
\end{proof}

\begin{remark}\label{rem:lim}
By \cite[Thm.~A, Thm.~B]{BGMR} again, $X_\theta$ is smooth in
types $A_\ell$ and $G_2$; normal and singular in types $C_\ell$
($\ell \geq 3$), $D_\ell$, $E_6$, $E_7$ and $E_8$; non-normal in 
the remaining types, $B_\ell$ and $F_4$. 

Also, the above constructions can be adapted to the type $A_\ell$: 
then every $C \in \sK_x$ intersects transversally the 
boundary divisor $D_1$ at a unique point, and likewise for 
$D_\ell$. Using the projections $D_1 \to G/P_1 \cong \bP^\ell$
and $D_\ell \to G/P_\ell \cong (\bP^\ell)^*$, one obtains 
a $\PGL_{\ell + 1}$-equivariant morphism 
$f: \sK_x \to \bP^\ell \times (\bP^\ell)^*$,
which is thus identified with an equivariant
endomorphism of $\bP^\ell \times (\bP^\ell)^*$.
One concludes as above that $f$ is an isomorphism,
by using the fact that the isotropy group of the open 
orbit (the image of $\GL_\ell$ in $\PGL_{\ell + 1}$) 
is its own normalizer. But this construction just yields a variant 
of that considered in Proposition \ref{prop:A}.
\end{remark}

\section{An application}
\label{sec:appl}

We return to the general setup of the introduction, 
where $X$ is a projective uniruled manifold and 
$\sK \subset \RatCurves^\n(X)$ a family of minimal 
rational curves. Let $\sC$ be the subvariety of 
the projectivized tangent bundle $\bP T(X)$ formed 
by VMRTs of $\sK$, namely the fiber of $\sC \to X$ 
at a general point $x\in X$ is the VMRT 
$\sC_x \subset \BP T_x(X)$. 
A germ of holomorphic vector field $v$ at
$x$ is said to {\em preserve $\sC$} if the local
one-parameter family of biholomorphisms integrating 
$v$ lifts to local biholomorphisms of $\BP T(X)$ 
preserving $\sC$. The set of all such germs forms 
a Lie algebra, called the
{\em Lie algebra of infinitesimal automorphisms of the VMRT
structure} $\sC$ at $x$, to be denoted by $\aut(\sC, x)$.
The manifold $X$ is said to have the 
{\em Liouville property with respect to $\sK$} 
if every infinitesimal automorphism of $\sC$ at a general point 
$x \in X$ extends to a global holomorphic vector field on 
some Zariski open subset of $X$. In particular, if 
$\aut(\sC, x) = \aut(X)$ (the Lie algebra of global holomorphic
vector fields), then $X$ has the Liouville property.

As an application of Theorem \ref{thm:main}, we shall prove:

\begin{proposition}\label{prop:liouville}
Let $X$ be the wonderful compactification of a simple
algebraic group $G$ of adjoint type. Assume that $G$ is not of 
type $A_1$ or $C$. Then $X$ has the Liouville property.
\end{proposition}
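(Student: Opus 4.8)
The plan is to compute the Lie algebra $\aut(\sC,x)$ of infinitesimal automorphisms of the VMRT structure and show it equals $\aut(X)$, which by Theorem \ref{thm:main} identifies $\sC_x$ with the closed orbit $\bP\sO_{\min}\subset\bP\fg$ (the adjoint variety). By work of Hwang--Mok on prolongations and Cartan-type characterizations, the relevant object is the \emph{symbol algebra} attached to the cone $\widehat{\sC_x}\subset T_x(X)=\fg$: one forms the graded Lie algebra $\fg_{-1}\oplus\fg_0\oplus\fg_1\oplus\cdots$ where $\fg_{-1}=\fg$, $\fg_0=\aut(\widehat{\sC_x})\subset\fgl(\fg)$ is the Lie algebra of linear automorphisms preserving the affine cone over the adjoint variety, and $\fg_k$ for $k\geq 1$ is the $k$-th prolongation. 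The Liouville property will follow once we show this symbol algebra is finite-dimensional and, more precisely, that it has no positive part beyond what $\aut(X)$ already provides; the standard reference here is the theory developed in Hwang's survey \cite{Hwang} together with the prolongation computations of Hwang--Mok, and the key input we can quote is that for $\sC_x$ an irreducible Hermitian-type or adjoint-type subvariety these prolongations are classified.

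First I would identify $\fg_0$: the cone over the adjoint variety $\bP\sO_{\min}$ in $\fg$ is $\overline{\sO_{\min}}$, and its linear automorphism group is governed by $G$ together with the scalars, so $\fg_0=\fg\oplus\C\cdot\id$ (here one uses that $G$ is simple and $\fg$ is irreducible under $\Aut(G)^0$, and that the adjoint variety is not a variety of minimal rational tangents admitting extra symmetries — this is where the exclusions of types $A_1$ and $C$ enter, since in those cases $\sO_{\min}$ is the cone over a Veronese (type $A_1$) or the highest-weight orbit has different linear-algebra behavior (type $C$, where $\theta=2\varpi_1$ and the minimal orbit sits inside $\Sym^2 V$), giving larger automorphisms or nonvanishing prolongation). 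Next I would compute the first prolongation $\fg_1=\{A\in\Hom(\fg,\fgl(\fg))_{\mathrm{sym}} : A(v)v\in\widehat{T_{[v]}\sC_x}\ \forall v\in\widehat{\sC_x}\}$ and show it vanishes: for the adjoint variety outside types $A_1$ and $C$, the prolongation $\mathfrak{g}_1$ of its linear automorphism algebra is zero, by the prolongation results for VMRT of rational homogeneous spaces (adjoint varieties have vanishing prolongation except the degenerate low-rank cases). Granting $\fg_1=0$, all higher $\fg_k$ vanish too, so the symbol algebra is $\fg\oplus(\fg\oplus\C\id)$, and the germs of vector fields at $x$ preserving $\sC$ form a finite-dimensional Lie algebra of dimension $\dim\fg + \dim\fg + 1 = 2\dim G + 1$.

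The final step matches this with $\aut(X)$. The automorphism group of the wonderful compactification $X$ is known to be $(G\times G)\rtimes\langle\iota\rangle$ with $\iota$ an involution (when $-w_0\neq\id$) or just $G\times G$ otherwise, so $\dim\Aut(X)=2\dim G$; and the stabilizer of $x$ being $\diag(G)$ acts on $T_x(X)=\fg$ by the adjoint representation plus the scaling coming from the $\bG_m$ in the local structure — more precisely the vector fields on $X$ vanishing to order $\geq 1$ at $x$ correspond to $\fg_0$-part, those vanishing to order $0$ to $\fg_{-1}=\fg$, giving exactly $\dim\aut(X)=2\dim\fg$, and the extra scalar $\C\id$ in the symbol is \emph{not} realized globally but is automatically tangent to $\sC$ since any variety in projective space is preserved by scaling — this scalar does not count as a genuine new infinitesimal automorphism of the $\sC$-structure because it acts trivially on $\bP T(X)$. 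Hence $\aut(\sC,x)$, modulo this irrelevant scalar, has the same dimension as $\aut(X)$, and since $\aut(X)\subseteq\aut(\sC,x)$ always (global automorphisms preserve the intrinsically-defined VMRT), equality holds and every infinitesimal automorphism of $\sC$ extends globally, proving the Liouville property. The main obstacle I expect is the prolongation vanishing $\fg_1=0$: this requires either invoking the Hwang--Mok classification of prolongations of VMRT cones for all simple types $\neq A_1, C$, or carrying out a direct $\fsl_2$-representation-theoretic computation on $\Sym^2\fg^*\otimes\fg$ restricted to the adjoint variety; getting the type $C$ and $A_1$ exclusions to emerge cleanly from this computation (rather than ad hoc) is the delicate point.
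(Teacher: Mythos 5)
Your strategy for the non--type-$A$ case is essentially the paper's: identify $\aut(\hat{\sC_x})$ with $\fg\oplus\C\,\id$ via Demazure-type rigidity, kill the prolongations by the Cartan/Kobayashi--Nagano classification (this is where $A_1$ and $C$ are excluded, exactly as you say, because there $\sO_{\min}$ is the cone over a Veronese, resp.\ sits in $\Sym^2 V$, and the prolongation does not vanish), and compare dimensions with $\aut(X)=\fg\oplus\fg$. However, there are two genuine gaps. First, you do not treat type $A_\ell$ ($\ell\geq 2$) at all, and your opening claim that Theorem \ref{thm:main} identifies $\sC_x$ with the adjoint variety $\bP\sO_{\min}$ is false in that case: by Theorem \ref{thm:main}(iii) the VMRT is the (projected) Segre variety $\bP V\times\bP V^*$, whose affine cone is the VMRT cone of a Grassmannian and hence has \emph{nonvanishing} prolongation, so the symbol-algebra argument collapses there. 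The paper handles type $A$ by an entirely different route, namely the description of $X$ as an iterated blow-up of $\bP\End(V)$ along determinantal loci together with \cite[Thm.~9.6, Prop.~9.4]{FH}.

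Second, your disposal of the extra scalar is not a valid argument. The prolongation machinery gives only the upper bound $\dim\aut(\sC,x)\leq\dim\fg+\dim\aut(\hat{\sC_x})=2n+1$; it does not compute $\aut(\sC,x)$ on the nose, and the possibility $\dim\aut(\sC,x)=2n+1$ must be excluded. Your claim that the scalar ``does not count because it acts trivially on $\bP T(X)$'' confuses a germ of vector field with its linear part at $x$: a germ whose linear part is the Euler field is a genuine, nonzero element of $\aut(\sC,x)$ if it exists, and the Liouville property would then require it to extend to a global vector field (which it need not). The correct way to close this, as in the paper, is: if equality $\dim\aut(\sC,x)=2n+1$ held, then by \cite[Cor.~5.13]{FH} the VMRT structure would be locally flat, whence by \cite[Prop.~5.14]{FH} one would have $\aut(\sC,x)=V\oplus\fg\oplus\C$ with $V$ abelian of dimension $n$; this cannot contain $\aut(X)=\fg\oplus\fg$ because $\fg$ is simple (any semisimple subalgebra projects injectively modulo the abelian radical $V\oplus\C$ into the single factor $\fg$). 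Without this step your argument does not rule out a one-dimensional excess of local over global infinitesimal automorphisms, which is precisely the content of the Liouville property.
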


\begin{proof}
Assume first that $G$ is of type $A_\ell$ $(\ell \geq 2)$. Let 
$Y_1 = \BP V \times \BP V^* \subset \BP {\rm End}(V)$ be the Segre embedding. 
Then the blow-up ${\rm Bl}_{Y_1}(\BP {\rm End}(V))$ has the Liouville property by  
\cite[Thm.~9.6]{FH}. It follows then from \cite[Prop.~9.4]{FH}
that $X$ has the Liouville property as well.

Next, assume $G$ is not of type $A$ nor of type $C$. 
By Theorem \ref{thm:main}, the VMRT $\sC_x \subset \BP T_x(X)$  of $X$ 
at a general point $x$ is $\BP \0_{\min} = G/P_{\theta} \subset \BP \fg$. 
Using \cite[Thm.~1]{Demazure}, it follows that the natural map 
$\fg \to \aut(\sC_x)$ is an isomorphism.
Consider the affine cone $\hat{\sC_x} \subset T_x(X)$ and its Lie algebra
of linear infinitesimal automorphisms, $\aut(\hat{\sC_x})$; then
we obtain $\aut(\hat{\sC_x}) \cong \fg \oplus \C$.

We now use a theorem of Cartan and Kobayashi-Nagano 
(cf. \cite[Thm.~1.1]{FH}); to state it, we need the notion 
of prolongation. Let $V$ be a finite-dimensional vector space and 
$\fl \subset {\rm End}(V)$ a Lie subalgebra. 
The {\em $k$-th prolongation} (denoted by $\fl^{(k)}$) of $\fg$ 
is defined by $\fl^{(k)} = \Hom(\Sym^{k+1}V, V) \cap \Hom(\Sym^kV, \fl)$,
where the intersection is taken in 
$(V^*)^{\otimes(k + 1)} \otimes V = (V^*)^{\otimes k} \otimes \End(V)$.
The theorem asserts that if $V$ is an irreducible representation 
of $\fl$, and if $\fl^{(2)} \neq 0$, then 
$\fl = \fgl(V), \fsl(V), \fsp(V)$ or $\fcsp(V)$. 
Moreover, if $\fl^{(2)} = 0$ but $\fl^{(1)} \neq 0$, then 
$\fl \subset \End(V)$ is the isotropy representation of an irreducible 
Hermitian symmetric space of compact type, different from the projective
space.

Applying this result to $\aut(\hat{\sC_x}) \subset \End(\fg)$, we obtain 
that $\aut(\hat{\sC_x})^{(1)} = 0$. In view of 
\cite[Def.~5.3, Prop.~5.10]{FH},  it follows that
\begin{equation}\label{eqn:ineq} 
\dim \aut(\sC, x) \leq \dim(G) + \dim \aut(\hat{\sC}_x) = 2 n +1, 
\end{equation}
where $n := \dim(G) = \dim(X)$. 
On the other hand, we have $\aut(X) \subset \aut(\sC, x)$, since the action 
of $\Aut(X)$ preserves the VMRT structure $\sC$ on $X$. Now recall 
from \cite[Ex.~2.4.5]{Brion} that the natural map 
$\fg \oplus \fg \to \aut(X)$ is an isomorphism (in all types except $A_1$).
This gives the inequalities
$2n = \dim \aut(X) \leq \dim(\aut(\sC, x) \leq 2n +1.$

If  $\dim \aut(\sC, x) = 2n$, then $X$ has the Liouville property.  
So we may assume that $\dim \aut(\sC, x) = 2n +1$. 
Then the inequality in (\ref{eqn:ineq}) is an equality, which implies by 
\cite[Cor.~5.13]{FH} that the VMRT structure $\sC$ is locally flat, 
that is, for a general point $x \in X$, there exists an analytic
open subset $U \subset X$, an analytic open subset 
$W \subset \C^n$ and a biholomorphic map $\phi: U \to W$ 
such that the induced map $d \phi: \BP T(U) \to \BP T(W)$ sends 
$\sC|_U$ to $W \times \sC_x$.

By \cite[Prop.~5.14]{FH}, it follows that 
$\aut(\sC, x) = V \oplus \aut(\hat{\sC_x})$, 
where $V$ denotes the abelian Lie algebra of dimension 
$n$. Since $\aut(\hat{\sC_x}, x) = \fg \oplus \C$,
this gives an inclusion of Lie algebras: 
$\aut(X) = \fg \oplus \fg \subset V \oplus \fg \oplus \C$, 
which is not possible since $\fg$ is simple.
\end{proof}

Combining Proposition \ref{prop:liouville} with 
\cite[Prop.~9.5]{FH}, we obtain the following:

\begin{corollary}\label{cor:TRP}
Assume that $G$ is not of type $A_1$ or $C$.
Then for any projective variety $Y$ and any family of morphisms
$\{ f_t : Y \to X, |t| <1 \}$, where $f_0$ is surjective, 
there exists a family of automorphisms 
$\sigma_t: X \to X$ such that $f_t = \sigma_t \circ f_0$.
\end{corollary}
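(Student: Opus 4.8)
The plan is to deduce Corollary~\ref{cor:TRP} from Proposition~\ref{prop:liouville} by invoking the target rigidity theorem of \cite[Prop.~9.5]{FH}, which asserts precisely that if a projective uniruled manifold $X$ has the Liouville property with respect to some family of minimal rational curves, then every deformation of a surjective morphism $f_0 : Y \to X$ is obtained by composing $f_0$ with a family of automorphisms of $X$. So the only thing to check is that the hypotheses of that result are met. First I would note that $X$, the wonderful compactification of a simple adjoint group $G$ not of type $A_1$ or $C$, is a smooth projective uniruled (indeed Fano) variety, and that by Theorem~\ref{thm:main} it carries a family $\sK$ of minimal rational curves; then Proposition~\ref{prop:liouville} gives the Liouville property with respect to $\sK$. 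Feeding this into \cite[Prop.~9.5]{FH} yields the statement verbatim.

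The one subtlety worth spelling out is that \cite[Prop.~9.5]{FH} is phrased for a single surjective morphism together with its germ of deformations, whereas the corollary speaks of an analytic family $\{ f_t : Y \to X, \ |t| < 1 \}$ with $f_0$ surjective. These are the same thing: a family of morphisms $Y \times \Delta \to X$ over the unit disc $\Delta$ with central fibre surjective is exactly a deformation of $f_0$ in the sense of \cite{FH}, so no adjustment is needed. I would also remark that surjectivity of $f_0$ is the essential hypothesis — it is what forces the deformation to ``see'' the VMRT structure on a Zariski-dense open subset of $X$, where the Liouville property operates — and that $Y$ is not assumed smooth, which causes no difficulty since the argument in \cite{FH} only uses properties of the target.

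There is essentially no obstacle here: the corollary is a formal consequence of Proposition~\ref{prop:liouville} and the cited black box. If anything, the ``main point'' is simply to make sure the Liouville property has been established with respect to a family of minimal rational curves in the precise sense required by \cite[Prop.~9.5]{FH} — and that is exactly the content of Proposition~\ref{prop:liouville}, which was proved using Theorem~\ref{thm:main} (identifying the VMRT) together with the prolongation computation and \cite[Cor.~5.13, Prop.~5.14]{FH}. Hence the proof is a single sentence: combine Proposition~\ref{prop:liouville} with \cite[Prop.~9.5]{FH}.
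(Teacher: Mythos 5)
Your proposal is correct and matches the paper exactly: the authors give no separate proof, stating the corollary as obtained by ``combining Proposition~\ref{prop:liouville} with \cite[Prop.~9.5]{FH},'' which is precisely your one-line argument. Your additional remarks on the equivalence of the family formulation with the deformation-theoretic phrasing in \cite{FH} are sound but not needed beyond what the paper already asserts.
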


\begin{remark}
When $G$ is of type $A_1$, the variety $X$ is isomorphic to $\BP^3$, 
which does not have the Liouville property. The statement in 
Corollary \ref{cor:TRP} also fails in this case.  
\end{remark}

\bigskip

Michel Brion 

Institut Fourier, Universit\'e de Grenoble, France 

e-mail: Michel.Brion@ujf-grenoble.fr 

\bigskip
Baohua Fu

Institute of Mathematics, AMSS, 55 ZhongGuanCun East Road,

Beijing, 100190, China

e-mail: bhfu@math.ac.cn


\begin{thebibliography}{100}

\bibitem[Bo]{Bourbaki}
Bourbaki, Nicolas,
\emph{Groupes et alg\`ebres de Lie}, 
Springer-Verlag, 2006--2007.


\bibitem[BGMR]{BGMR}
Bravi, Paolo; Gandini, Jacopo; Maffei, Andrea; Ruzzi, Alessandro,
\emph{Normality and non-normality of group compactifications 
in simple projective spaces}, 
Ann. Inst. Fourier (Grenoble) 61 (2011), no. 6, 2435--2461.


\bibitem[Br]{Brion}
Brion, Michel,
\emph{The total coordinate ring of a wonderful variety},
J. Algebra 313 (2007), 61--99. 


\bibitem[BK]{BK}
Brion, Michel; Kumar, Shrawan
\emph{Frobenius splitting methods in geometry and representation theory},
Progress in Mathematics, 231, Birkh\"auser Boston, Inc., Boston, MA, 2005.


\bibitem[CP]{CP}
Chaput, Pierre-Emmanuel; Perrin, Nicolas,
\emph{On the quantum cohomology of adjoint varieties},
Proc. Lond. Math. Soc. (3) 103 (2011), no. 2, 294--330. 


\bibitem[CFH]{CFH}
Chen, Yifei; Fu, Baohua; Hwang, Jun-Muk,
\emph{Minimal rational curves on complete toric varieties and applications},
Proc. Edinb. Math. Soc. (2) 57 (2014), no. 1, 111--123.




\bibitem[dCP]{dCP}
De Concini, Corrado; Procesi, Claudio, 
\emph{Complete symmetric varieties},
in: Invariant theory (Montecatini, 1982), 1--44, 
Lecture Notes in Math. 996, Springer, Berlin 1983.


\bibitem[D]{Demazure}
Demazure, Michel, 
\emph{Automorphismes et d\'eformations des vari\'et\'es de Borel},
Invent. math. 39 (1977), 179--186.


\bibitem[FH]{FH}
Fu, Baohua; Hwang, Jun-Muk,  
\emph{Classification of non-degenerate projective varieties 
with non-zero prolongation and application to target rigidity}, 
Invent. Math. 189 (2012), no. 2, 457--513.




\bibitem[Ho]{Horrocks}
Horrocks, Geoffrey,
\emph{Fixed point schemes of additive group actions},
Topology 8 (1969), 233--242. 


\bibitem[Hw]{Hwang} 
Hwang, Jun-Muk,  
\emph{Geometry of minimal rational curves on Fano manifolds},   
ICTP Lect. Notes 6 (2001), 335--393.





\bibitem[HM1]{HM1}
Hwang, Jun-Muk; Mok, Ngaiming,
\emph{Deformation rigidity of the rational homogeneous space 
associated to a long simple root},
Ann. Sci. \'Ecole Norm. Sup. (4) 35 (2002), no. 2, 173--184.


\bibitem[HM2]{HM2}
Hwang, Jun-Muk; Mok, Ngaiming,
\emph{Birationality of the tangent map for minimal rational curves},
Asian J. Math. 8 (2004), no. 1, 51--63.


\bibitem[HM3]{HM3}
Hwang, Jun-Muk; Mok, Ngaiming,
\emph{Deformation rigidity of the $20$-dimensional $F_4$-homo\-geneous 
space associated to a short root}, in: 
Algebraic transformation groups and algebraic varieties, 37--58,
Encyclopaedia Math. Sci. 132, Springer, Berlin, 2004. 


\bibitem[HM4]{HM4}
Hwang, Jun-Muk; Mok, Ngaiming,
\emph{Prolongations of infinitesimal linear automorphisms 
of projective varieties and rigidity of rational homogeneous 
spaces of Picard number 1 under K\"ahler deformation},
Invent. Math. 160 (2005), no. 3, 591--645.


\bibitem[Ka]{Kannan}
Kannan, S. Senthamarai, 
\emph{Remarks on the wonderful compactification of semisimple 
algebraic groups}, 
Proc. Indian Acad. Sci. Math. Sci. 109 (1999), no. 3, 241--256.


\bibitem[Ke]{Kebekus}
Kebekus, Stefan, 
\emph{Families of singular rational curves},
J. Algebraic Geom. 11 (2002), no. 2, 245--256. 




\bibitem[Ko]{Kollar}
Koll\'ar, J\'anos, 
\emph{Rational curves on algebraic varieties},
Ergeb. Math. Grenzgeb. (3) 32, Springer-Verlag, 1996.


\bibitem[LM]{LM}
Landsberg, Joseph; Manivel, Laurent,
\emph{On the projective geometry of rational homogeneous varieties},
Comment. Math. Helv. 78 (2003), 65--100.


\bibitem[L]{Luna}
Luna, Domingo,
\emph{Slices \'etales}, 
M\'emoire Soc. Math. France 33 (1973), 81--105.


\bibitem[T]{Timashev}
Timashev, Dmitri,
\emph{Equivariant compactifications of reductive groups},
Sb. Math. 194 (2003), no. 3-4, 589--616. 


\bibitem[V]{Vainsencher}
Vainsencher, Israel,
\emph{Complete collineations and blowing up determinantal ideals},
Math. Ann. 267 (1984), no. 3, 417--432.

\end{thebibliography}
\end{document}